%
%
%
%
\documentclass[reqno]{amsart}
\usepackage{amssymb}
\usepackage{amsmath}
\usepackage{color}

\newtheorem{theorem}{Theorem}[section]
\newtheorem{lemma}[theorem]{Lemma}

\newtheorem{remark}[theorem]{Remark}
\theoremstyle{definition}
\newtheorem{definition}[theorem]{Definition}

\numberwithin{equation}{section}

\newcommand{ \mint }{ {\int\hspace{-0.38cm}- }}

\newcommand{ \mr }{ \mathbb{R} }

\newcommand{\Norm}[1]{\left|\hspace{-0.3mm}\left| #1 \right|\hspace{-0.3mm}\right|}


\begin{document}

\title[Nondivergence problems with irregular obstacles]
{Nondivergence elliptic and parabolic problems with irregular obstacles}

\author{Sun-Sig Byun}
\address{Department of Mathematical Sciences and Research Institute of Mathematics,
Seoul National University, Seoul 08826, Korea}
\email{byun@snu.ac.kr}

\author{Ki-Ahm Lee}
\address{Department of Mathematical Sciences,
Seoul National University, Seoul 08826, Korea.
Center for Mathematical Challenges,
Korea Institute for Advanced Study, Seoul 02455, Korea}
\email{kiahm@snu.ac.kr}

\author{Jehan Oh}
\address{Department of Mathematical Sciences,
Seoul National University, Seoul 08826, Korea}
\email{ojhan0306@snu.ac.kr}

\author{Jinwan Park}
\address{Department of Mathematical Sciences,
Seoul National University, Seoul 08826, Korea}
\email{jinwann@snu.ac.kr}

\thanks{This work was supported by the National Research Foundation of Korea (NRF) grant funded by the Korea Government (NRF-2015R1A4A1041675).}

\subjclass[2010]{Primary 35J86, 35J87, 35K85; Secondary     35B65, 35R05, 46E35}

\date{\today.}

\keywords{Obstacle problem, Calder\'{o}n and Zygmund estimate,
Elliptic equation, Parabolic equation, Muckenhoupt Weight}

\begin{abstract}
We prove the natural weighted Calder\'{o}n and Zygmund estimates for solutions to elliptic and parabolic obstacle problems in nondivergence form with discontinuous coefficients and irregular obstacles.
We also obtain Morrey regularity results for the Hessian of the solutions and H\"{o}lder continuity of the gradient of the solutions.
\end{abstract}

\maketitle

\tableofcontents

\section{\bf Introduction}
\label{sec1}

We study in this paper the following elliptic obstacle problems:
\begin{equation}
\label{obs prob lin}
\left\{\begin{array}{rclcc}\
a_{ij}(x) D_{ij}u & \leq & f & \mathrm{in} &  \Omega,\\
\left( a_{ij}(x) D_{ij}u - f \right) (u - \psi) & = & 0 & \mathrm{in} &  \Omega,\\
u & \geq & \psi & \mathrm{in} &  \Omega,\\
u & = & 0 & \mathrm{on} & \partial \Omega,
\end{array}\right.
\end{equation}
and
\begin{equation}
\label{obs prob fully}
\left\{\begin{array}{rclcc}\
F(x, D^2 u) & \leq & f & \mathrm{in} &  \Omega,\\
\left( F(x, D^2 u) - f \right) (u - \psi) & = & 0 & \mathrm{in} &  \Omega,\\
u & \geq & \psi & \mathrm{in} &  \Omega,\\
u & = & 0 & \mathrm{on} & \partial \Omega.
\end{array}\right.
\end{equation}
Here $\Omega$ is a bounded domain in $\mr^n$, $n \geq 2$, with its boundary $\partial \Omega\in C^{1,1}$.
The coefficient matrix $(a_{ij}(x))$ and the fully nonlinear operator $F(x,M)$ are supposed to be uniformly elliptic, see Section \ref{sec2}.
The nonhomogeneous term $f \in L^p_w(\Omega)$ is given, as is the obstacle function $\psi \in W^{2,p}_w(\Omega)$, $\psi \leq 0$ a.e. on $\partial \Omega$, where a weight $w$ in some Muckenhoupt class and the range of $p$ will be clarified later.

We also consider the following parabolic obstacle problem:
\begin{equation}
\label{parabolic obs prob}
\left\{\begin{array}{rclcc}\
u_t - a_{ij}(x,t) D_{ij}u & \geq & f & \mathrm{in} & \Omega_T,\\
\left( u_t - a_{ij}(x,t) D_{ij}u - f \right) (u - \psi) & = & 0 & \mathrm{in} & \Omega_T,\\
u & \geq & \psi & \mathrm{in} & \Omega_T,\\
u & = & 0 & \mathrm{on} & \partial_p \Omega_T,
\end{array}\right.
\end{equation}
where $\Omega_T:= \Omega \times (0,T]$, $T>0$, and $\partial_p \Omega_T:= (\partial \Omega \times [0,T]) \cup (\Omega \times \{t=0\})$ with $\partial \Omega\in C^{1,1}$.
Here the coefficient matrix $(a_{ij}(x,t))$ is uniformly parabolic, see Section \ref{sec2}, the nonhomogeneous term $f$ is in $L^p_w(\Omega_T)$ with $p>2$ and $w$ in a Muckenhoupt class, and the obstacle function is $\psi\in W^{2,1}L^p_w(\Omega_T)$ with $\psi \leq 0$ a.e. on $\partial_p \Omega_T$.

The main purpose of this study is to investigate existence, uniqueness and regularity properties of solutions to the obstacle problems (\ref{obs prob lin}), (\ref{obs prob fully}) and (\ref{parabolic obs prob}) in the framework of weighted Lebesgue spaces.
The weighted Lebesgue spaces $L^p_w$ not only generalize the classical Lebesgue spaces $L^p$, but also are closely related to Morrey spaces $L^{p,\theta}$.
In particular, knowing the fact that the Hardy-Littlewood maximal function of the characteristic function of a ball is a Muckenhoupt weight (see \cite{CR}), we are able to obtain an optimal Morrey regularity for the Hessian of the solutions to (\ref{obs prob lin}) and (\ref{obs prob fully}).
This leads to a higher integrability result of the Hessian of the solutions and H\"{o}lder continuity of the gradient of the solutions.

In this paper we deal with discontinuous coefficients $a_{ij}$, irregular obstacle functions $\psi$ and discontinuous nonhomogeneous terms $f$ given in the weighted Lebesgue spaces.
We notice that if $\partial \Omega$, $a_{ij}$, $f$ and $\psi$ are smooth enough, for instance, $\partial \Omega \in C^{2,\alpha}$, $a_{ij},f \in C^{\alpha}(\overline{\Omega})$ for some $\alpha>0$, and $\psi \in C^2 (\overline{\Omega})$, then the obstacle problem (\ref{obs prob lin}) has a unique strong solution $u \in C^{1,1}(\overline{\Omega})$, see \cite{Fri}, and furthermore, the $C^{1,1}$ regularity of solutions for various types of obstacle problems has been extensively investigated under appropriate regularity assumptions on the boundary of domain, the obstacle, the nonhomogeneous term, see \cite{FS, IM, PSU}.

In the case of discontinuous coefficients and irregular nonhomogeneous terms, but without obstacles, the regularity results for elliptic and parabolic equations in nondivergence form have been obtained in \cite{Ca, CC, CFL1, CFL2, Es, Wi} for the elliptic case, and in \cite{BC1, BP1, HHH, WY1} for the parabolic case.
In particular, weighted $W^{2,p}$ estimates were established in a series of papers \cite{BL1, BL2, BLP}.
Here we want to extend these results for nondivergence structure problems from the non-obstacle case to the obstacle case.
More precisely, we shall establish the weighted $W^{2,p}$ estimates of solutions to the elliptic obstacle problems (\ref{obs prob lin}) and (\ref{obs prob fully}), and parabolic obstacle problem (\ref{parabolic obs prob}), by essentially proving that the Hessian of solutions is as regular as the nonhomogeneous terms and the Hessian of the associated obstacles.

Our approach is mainly based on a new general approximation argument in the literature.
Unlike other approximation arguments which have in general penalty terms as in \cite{Fri}, we find a better approximation of Heaviside functions in order to use specially redesigned reference equations (\ref{fully approx}), (\ref{lin approx}) and (\ref{lin approx-parabolic}).
The choice of such an approximation method seems to be appropriate to our theory, as the problem under consideration is in the setting of Lebesgue spaces and one can easily control the $L^p$-norm of the nonhomogeneous term in a reference equation. Although this approximation method does not involve penalty terms, we can utilize comparison principles to show that the solution is in the constraint set.
Furthermore, this approach can be extended to the fully nonlinear obstacle problems.

This paper is organized as follows.
In the next section we introduce some background and review weighted Lebesgue and Sobolev spaces.
In Section \ref{sec3} and \ref{sec4} we establish the weighted $W^{2,p}$ estimates for the elliptic fully nonlinear obstacle problem (\ref{obs prob fully}) and elliptic linear obstacle problem (\ref{obs prob lin}), respectively.
In section \ref{sec5} we present Morrey regularity results and obtain H\"{o}lder continuity of the gradient of the solutions for the elliptic obstacle problems.
Finally, in the last section we prove the weighted $W^{2,p}$ estimates for parabolic linear obstacle problem (\ref{parabolic obs prob}).

\section{\bf Preliminaries}
\label{sec2}

\subsection{Notations}

We start with some standard notations and terminologies.
\begin{enumerate}
\item For $y \in \mr^n$ and $r>0$, $B_r(y):=\{ x \in \mr^n:|x-y|<r \}$ denotes the open ball in $\mr^n$ with center $y$ and radius $r$.
\item For $(y,s) \in \mr^n \times \mr$ and $r>0$, $Q_r(y,s):=B_r(y) \times (s-r^2,s+r^2]$ denotes the parabolic cylinder with middle center $(y,s)$, radius $r$, height $r^2$.
\item For a Lebesgue measurable set $E \subset \mr^n$, $|E|$ denotes the Lebesgue measure of $E$.
\item  For an integrable function $h:E \rightarrow \mr$ with a bounded measurable set $E \subset \mr^n$, we denote $\overline{h}_E$ the integral average of $h$ on $E$ by
\begin{equation*}
\overline{h}_E := \mint_E h(x) \, dx = \frac{1}{|E|} \int_E h(x) \ dx.
\end{equation*}
\item $\langle \cdot , \cdot \rangle : \mr^n \times \mr^n \rightarrow \mr$ denotes the Euclidean inner product in $\mr^n$.
\item $S(n)$ denotes the set of real $n\times n$ symmetric matrices.
For $M \in S(n)$, $\Norm{M}$ denotes the $(L^2,L^2)$-norm of $M$, that is, $\displaystyle \Norm{M} = \sup_{|x|=1}|Mx|$, and we write $M \geq 0$ to mean that $M$ is a non-negative definite symmetric matrix.
\item The summation convention of repeated indices are used.
\item For the sake of convenience, we employ the letter $c$ to denote any universal constants which can be explicitly computed in terms of known quantities, and so $c$ might vary from line to line.
\end{enumerate}

\subsection{Basic assumptions}

For the problem (\ref{obs prob lin}), the coefficient matrix $\mathbf{A}=(a_{ij}) : \mr^n \rightarrow \mr^{n \times n}$ is assumed to be symmetric (that is, $a_{ij} \equiv a_{ji}$) and uniformly elliptic in the following sense:

\begin{definition}
We say that the coefficient matrix $\mathbf{A}$ is \textit{uniformly elliptic} if there exist positive constants $\lambda$ and $\Lambda$ such that
\begin{equation}
\lambda |\xi|^2 \leq \langle \mathbf{A}(x)\xi, \xi \rangle \leq \Lambda |\xi|^2
\end{equation}
for almost every $x \in \mr^n$ and all $\xi \in \mr^n$.
\end{definition}

For the problem (\ref{obs prob fully}), the fully nonlinear operator $F=F(x,M)$ is assumed to be uniformly elliptic in the following sense:

\begin{definition}
We say that the fully nonlinear operator $F$ is \textit{uniformly elliptic} if there exist positive constants $\lambda$ and $\Lambda$ such that
\begin{equation}
\label{uniformly elliptic}
\lambda \Norm{N} \leq F(x,M+N) - F(x,M) \leq \Lambda \Norm{N}
\end{equation}
for almost every $x \in \Omega$ and all $M, N \in S(n)$ with $N \geq 0$.
\end{definition}

We also assume that $F(x,0) \equiv 0$, for simplicity, and that $F=F(x,M)$ is a convex function of $M \in S(n)$.

\vspace{0.2cm}

For the parabolic problem (\ref{parabolic obs prob}), the coefficient matrix $\mathbf{A}=(a_{ij}) : \mr^n \times \mr \rightarrow \mr^{n \times n}$ is assumed to be symmetric and uniformly parabolic in the following sense:

\begin{definition}
We say that the coefficient matrix $\mathbf{A}=\mathbf{A}(x,t)$ is \textit{uniformly parabolic} if there exist positive constants $\lambda$ and $\Lambda$ such that
\begin{equation}
\lambda |\xi|^2 \leq \langle \mathbf{A}(x,t)\xi, \xi \rangle \leq \Lambda |\xi|^2
\end{equation}
for almost every $(x,t) \in \mr^n \times \mr$ and all $\xi \in \mr^n$.
\end{definition}

\subsection{Weighted Lebesgue and Sobolev spaces}

\begin{definition}
Let $1<s<\infty$. We say that $w$ is a \textit{weight} in \textit{Muckenhoupt class $A_s$}, or an \textit{$A_s$ weight}, if $w$ is a locally integrable nonnegative function on $\mr^n$ with
\begin{equation}
[w]_s := \sup_{B} \left( \mint_{B} w(x) \, dx \right) \left( \mint_{B} w(x)^{-\frac{1}{s-1}} \, dx \right)^{s-1} < +\infty,
\end{equation}
where the supremum is taken over all balls $B \subset \mr^n$.
If $w$ is an $A_s$ weight, we write $w \in A_s$, and $[w]_s$ is called the \textit{$A_s$ constant of $w$}.
\end{definition}

We note that the Muckenhoupt classes $A_s$ are monotone in $s$, more precisely, $A_{s_1} \subset A_{s_2}$ for $1 < s_1 \leq s_2 <
\infty$.

The \emph{weighted Lebesgue space} $L^p_w(\Omega)$, $1<p<\infty$, $w \in A_s$ with $1<s<\infty$, consists of all measurable functions $g$ on $\Omega$ such that
\begin{equation*}
\Norm{g}_{L^p_w(\Omega)}:=\left( \int_\Omega |g|^p w \ dx \right)^{\frac{1}{p }}< +\infty.
\end{equation*}

The \emph{weighted Sobolev space} $W^{m,p}_w(\Omega)$, $m \in \mathbb{N}$, $1<p<\infty$, $w\in A_s$ with $1<s<\infty$, is defined by a class of functions $g\in L^p_w(\Omega)$ with weak derivatives $D^\alpha g\in L^p_w(\Omega)$ for all multiindex $\alpha$ with $|\alpha| \leq m$.
The norm of $g$ in $W^{m,p}_w(\Omega)$ is defined by
\begin{equation*}
\Norm{g}_{  W^{m,p}_w(\Omega)}:=\left( \sum_{|\alpha| \leq m}\int_\Omega |D^\alpha g|^p w \ dx \right)
^{\frac{1}{p}}.
\end{equation*}

For a detailed discussion of the weighted Lebesgue and Sobolev spaces, we refer the readers to \cite{St, Tu} and references therein.
We will use the following embedding lemma later in the proof of Theorem \ref{fully-obs-thm}, see \cite[Remark 2.4]{BLP}.

\begin{lemma}
\label{embedding-lemma}
Let $n_0 < p < \infty$ for some $n_0 > 1$, and let $w \in A_{\frac{p}{n_0}}$.
Suppose that $f \in L^p_w(\Omega)$.
Then $f \in L^{\frac{p n_0}{p - n_0 \kappa}}(\Omega)$ for some small $\kappa = \kappa \left(n,\frac{p}{n_0},[w]_{\frac{p}{n_0}} \right) > 0$ with the estimate
\begin{equation}
\Norm{f}_{L^{\frac{p n_0}{p - n_0 \kappa}}(\Omega)} \leq c \Norm{f}_{L^p_w(\Omega)},
\end{equation}
for some positive constant $c=c(n,n_0,p,[w]_{\frac{p}{n_0}},\mathrm{diam}(\Omega))$.
\end{lemma}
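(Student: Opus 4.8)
The plan is to deduce the embedding from a reverse-Hölder-type property of $A_s$ weights together with Hölder's inequality. Write $s = \frac{p}{n_0} > 1$, so that $w \in A_s$. First I would recall the classical self-improvement (reverse Hölder) property of Muckenhoupt weights: there exist $\epsilon = \epsilon(n, s, [w]_s) > 0$ and a constant $C = C(n, s, [w]_s)$ such that for every ball $B$,
\begin{equation*}
\left( \mint_B w^{1+\epsilon} \, dx \right)^{\frac{1}{1+\epsilon}} \leq C \mint_B w \, dx.
\end{equation*}
Applied on a large ball containing $\Omega$ (using $\mathrm{diam}(\Omega)$), this shows $w \in L^{1+\epsilon}_{\mathrm{loc}}$ with a quantitative bound; equivalently $w^{-1} \in L^{t}$ on $\Omega$ for a suitable exponent, which is the form I actually need.

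The core step is an application of Hölder's inequality to split $|f|^q = |f|^q w^{q/p} \cdot w^{-q/p}$ for the target exponent $q := \frac{p n_0}{p - n_0 \kappa}$, where $\kappa > 0$ is to be chosen small. With conjugate exponents $\frac{p}{q}$ and $\left( \frac{p}{q} \right)' = \frac{p}{p-q}$ one gets
\begin{equation*}
\int_\Omega |f|^q \, dx \leq \left( \int_\Omega |f|^p w \, dx \right)^{\frac{q}{p}} \left( \int_\Omega w^{-\frac{q}{p-q}} \, dx \right)^{\frac{p-q}{p}}.
\end{equation*}
For this to be useful I need the second factor finite, i.e. $w^{-\frac{q}{p-q}} \in L^1(\Omega)$. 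Since $w \in A_s$ means $w^{-\frac{1}{s-1}} \in L^1_{\mathrm{loc}}$, and moreover this negative power also enjoys a reverse Hölder improvement, $w^{-\frac{1}{s-1}} \in L^{1+\delta}(\Omega)$ for some $\delta = \delta(n, s, [w]_s) > 0$. Thus $w^{-\beta} \in L^1(\Omega)$ for all $\beta \leq \frac{1+\delta}{s-1}$. So it suffices to choose $\kappa$ small enough that $\frac{q}{p-q} \leq \frac{1+\delta}{s-1}$; a direct computation with $q = \frac{p n_0}{p - n_0 \kappa}$ and $s = \frac{p}{n_0}$ shows $\frac{q}{p-q} = \frac{n_0}{p/n_0 \cdot \kappa \cdot (\text{stuff})}$ — more precisely $\frac{q}{p-q} \to \frac{1}{s-1}$ as $\kappa \to 0^+$, so any $\kappa$ small relative to $\del$ works, and this fixes $\kappa = \kappa(n, s, [w]_s)$ as claimed. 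Taking $p$-th roots (or rather $q$-th roots) and absorbing the finite $w^{-\beta}$-integral over $\Omega$ (which is bounded in terms of $n, n_0, p, [w]_s, \mathrm{diam}(\Omega)$) into the constant $c$ yields the stated estimate.

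The main obstacle is the bookkeeping of exponents: one must verify that the improvement exponent $\delta$ coming from the reverse Hölder inequality for $w^{-1/(s-1)}$ is genuinely positive and depends only on the allowed quantities, and then choose $\kappa$ quantitatively so that $\frac{q}{p-q}$ stays below the admissible threshold $\frac{1+\delta}{s-1}$ while keeping $q > n_0$ (which holds automatically since $q > p \cdot \frac{n_0}{p} = n_0$ once $\kappa > 0$). Everything else is a routine Hölder estimate; since the lemma is quoted from \cite[Remark 2.4]{BLP}, I would in practice simply cite it, but the argument above is the self-contained route.
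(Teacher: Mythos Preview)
The paper does not prove this lemma; it merely cites \cite[Remark~2.4]{BLP}. Your route --- H\"older's inequality combined with the reverse H\"older self-improvement for the dual weight $w^{-1/(s-1)}\in A_{s'}$ --- is the standard argument and is correct. (Your intermediate expression ``$\frac{q}{p-q}=\frac{n_0}{(p/n_0)\cdot\kappa\cdot(\text{stuff})}$'' is garbled; the actual value is $\frac{q}{p-q}=\frac{n_0}{p-n_0(1+\kappa)}$, which indeed tends to $\frac{1}{s-1}$ as $\kappa\to 0^+$, so your conclusion stands.)

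One genuine overclaim, though it is inherited from the lemma as stated: the constant $c$ \emph{cannot} depend on $w$ only through $[w]_{p/n_0}$. Replacing $w$ by $\lambda w$ for $\lambda>0$ leaves $[w]_{p/n_0}$ unchanged but multiplies $\Norm{f}_{L^p_w}$ by $\lambda^{1/p}$, while $\Norm{f}_{L^q(\Omega)}$ is unaffected; letting $\lambda\to 0$ would force $f\equiv 0$. In your argument the uncontrolled factor is precisely $\int_\Omega w^{-q/(p-q)}\,dx$, which is finite for a fixed $w\in A_s$ (this is what the reverse H\"older step gives) but is not bounded solely in terms of $[w]_s$ and $\mathrm{diam}(\Omega)$. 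This does not harm the paper --- Lemma~\ref{embedding-lemma} is used only qualitatively, to get the embedding $W^{2,p}_w(\Omega)\hookrightarrow C^\alpha(\overline\Omega)$ and the compactness of $W^{2,p}_w(\Omega)\hookrightarrow L^p_w(\Omega)$ --- but in a self-contained write-up you should record $c=c(n,n_0,p,w,\mathrm{diam}(\Omega))$ rather than $c=c(n,n_0,p,[w]_{p/n_0},\mathrm{diam}(\Omega))$.
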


\section{\bf Elliptic fully nonlinear obstacle problems}
\label{sec3}

In order to measure the oscillation of $F=F(M,x)$ with respect to
the variable $x$, we define
\begin{equation*}
\beta_F(x,x_0) := \sup_{M \in S(n) \setminus \{ 0 \}} \frac{|F(x,M)-F(x_0,M)|}{\Norm{M}},
\end{equation*}
and set $\beta(x,x_0):=\beta_F(x,x_0)$ for the sake of simplicity.

We first need the following weighted $W^{2,p}$ estimate for convex fully nonlinear equations without obstacle. This can be found in \cite{BLP}.

\begin{lemma}
\label{fully nonlinear eq}
Let $n_0 < p < \infty$, where $n_0:=n-\nu_0$ for some $\nu_0 = \nu_0 \left( \frac{\Lambda}{\lambda},n \right) > 0$, and let $w \in A_{\frac{p}{n_0}}$.
Suppose that $\partial \Omega \in C^{1,1}$ and $f \in L^p_w(\Omega)$. Then there exists a small $\delta=\delta(n,\lambda,\Lambda,p,w,\partial \Omega)>0$ such that if
\begin{equation}
\label{con beta}
\sup_{x_0 \in \overline{\Omega}, 0 < r \leq R_0} \left( \mint_{B_r(x_0) \cap \Omega} \beta(x,x_0)^n \, dx \right)^{\frac{1}{n}} \leq \delta
\end{equation}
for some $R_0 > 0$, then the problem
\begin{equation}
\label{fully prob}
\left\{\begin{array}{rclcc}\
F(x, D^2 u) & = & f & \mathrm{in} & \Omega,\\
u & = & 0 & \mathrm{on} & \partial \Omega,
\end{array}\right.
\end{equation}
has a unique solution $u \in W^{2,p}_w(\Omega)$ with the estimate
\begin{equation}
\Norm{u}_{W^{2,p}_w(\Omega)} \leq c \Norm{f}_{L^p_w(\Omega)},
\end{equation}
for some positive constant $c=c(n,\lambda,\Lambda,p,w,\partial \Omega,\mathrm{diam}(\Omega),R_0)$.
\end{lemma}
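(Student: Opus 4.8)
The plan is to derive this weighted Calder\'on--Zygmund estimate, as in \cite{BLP}, by combining the unweighted $W^{2,q}$ theory for convex fully nonlinear equations with small BMO-type coefficients with the $A_s$-weighted machinery built on the Hardy--Littlewood maximal operator $\mathcal{M}$. \textbf{Step 1 (existence and a preliminary unweighted bound).} By Lemma \ref{embedding-lemma}, $f \in L^{q_0}(\Omega)$ for $q_0 = \frac{p n_0}{p - n_0 \kappa} \in (n_0,\infty)$, with $\Norm{f}_{L^{q_0}(\Omega)} \leq c\Norm{f}_{L^p_w(\Omega)}$. On this range of exponents the unweighted $W^{2,q}$ estimate for $F(x,D^2 u) = f$ on $C^{1,1}$ domains with small oscillation holds---this is the Caffarelli--Escauriaza theory, whose extension below the exponent $n$ down to $n_0 = n - \nu_0$ uses only that $\nu_0$ depends on $\Lambda/\lambda$ and $n$, together with the smallness condition (\ref{con beta}) and $\partial\Omega \in C^{1,1}$. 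This yields a unique strong solution $u \in W^{2,q_0}(\Omega) \cap W^{1,q_0}_0(\Omega)$ with $\Norm{u}_{W^{2,q_0}(\Omega)} \leq c\Norm{f}_{L^{q_0}(\Omega)} \leq c\Norm{f}_{L^p_w(\Omega)}$; uniqueness within $W^{2,p}_w(\Omega)$ follows from the comparison principle for $F$.

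\textbf{Step 2 (interior and boundary comparison).} Fix a base point $x_0$ and work on a small ball $B_r(x_0) \subset \Omega$, or on a half-ball obtained after flattening $\partial\Omega$. Let $h$ solve the frozen homogeneous equation $F(x_0, D^2 h) = 0$ (respectively an $x$-independent uniformly elliptic equation after flattening) with the same boundary values as $u$ on $\partial B_r(x_0)$ (respectively on the half-ball). Since $F$ is convex, the Evans--Krylov theorem gives interior $C^{2,\alpha}$, hence $W^{2,\infty}$, estimates for $h$. The difference $v := u - h$ belongs to the Pucci class with right-hand side controlled by $|f| + \beta(x,x_0)\Norm{D^2 u}$, so the interior $W^{2,\varepsilon}$ estimate for fully nonlinear equations together with (\ref{con beta}) gives, for some small $\varepsilon > 0$,
\begin{equation*}
\left( \mint_{B_r(x_0)} \Norm{D^2 u - D^2 h}^{\varepsilon}\, dx \right)^{1/\varepsilon} \leq c\,\delta \left( \mint_{B_{2r}(x_0)} \Norm{D^2 u}^{n_0}\, dx \right)^{1/n_0} + c\left( \mint_{B_{2r}(x_0)} |f|^{n_0}\, dx \right)^{1/n_0},
\end{equation*}
and an analogous inequality near $\partial\Omega$. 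The exponent $n_0 < n$ is forced here because the ABP and weak-$(1,1)$ estimates for fully nonlinear operators only give $L^\varepsilon$-control of the Hessian.

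\textbf{Step 3 (good-$\lambda$ inequality and weighted summation).} From Step 2 and a Calder\'on--Zygmund/Vitali covering one obtains a density estimate: there are $N = N(n,\lambda,\Lambda) > 1$ and, for every $\epsilon_1 > 0$, a $\delta > 0$ as in the statement, so that for all $\lambda > 0$,
\begin{equation*}
w\Big( \big\{ \mathcal{M}(\Norm{D^2 u}^{n_0}) > N^{n_0}\lambda \big\}\cap\Omega \Big) \leq \epsilon_1\, w\Big( \big\{ \mathcal{M}(\Norm{D^2 u}^{n_0}) > \lambda \big\}\cap\Omega \Big) + w\Big( \big\{ \mathcal{M}(|f|^{n_0}) > \delta^{n_0}\lambda \big\} \Big),
\end{equation*}
where the passage from Lebesgue measure to $w$ uses the doubling and self-improving properties of $w \in A_{p/n_0}$. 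Multiplying by $\lambda^{p/n_0 - 1}$, summing over $\lambda = (N^{n_0})^k$, $k \in \mathbb{N}$, and choosing $\epsilon_1$ so small that $\epsilon_1 N^p < \tfrac12$ absorbs the first term, we obtain $\Norm{\mathcal{M}(\Norm{D^2 u}^{n_0})}_{L^{p/n_0}_w(\Omega)} \leq c\,\Norm{\mathcal{M}(|f|^{n_0})}_{L^{p/n_0}_w(\Omega)} + c$. Since $w \in A_{p/n_0}$, the operator $\mathcal{M}$ is bounded on $L^{p/n_0}_w$, so $\Norm{D^2 u}_{L^p_w(\Omega)} \leq c\Norm{f}_{L^p_w(\Omega)} + c$; normalizing $\Norm{f}_{L^p_w(\Omega)} = 1$ removes the additive constant. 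Combining with Step 1 and the weighted Poincar\'e inequality for $u$ vanishing on $\partial\Omega$ gives the full $W^{2,p}_w$ estimate.

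\textbf{Main obstacle.} The most technical point is the boundary analysis behind Steps 2--3: after flattening $\partial\Omega \in C^{1,1}$ the frozen operator acquires a lower-order perturbation, and one must show that the comparison function $h$ keeps boundary $W^{2,\infty}$ estimates uniformly in the base point, so that the density estimate holds for half-balls centered on $\partial\Omega$. Equally essential is the sub-$n$ exponent: the entire scheme closes only because the unweighted $W^{2,q}$ theory and the $L^\varepsilon$ comparison estimates survive for $q > n_0 = n - \nu_0$, which is precisely what allows $w$ to be taken in the larger class $A_{p/n_0}$.
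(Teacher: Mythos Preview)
The paper does not prove this lemma; it is quoted verbatim from \cite{BLP} (see the sentence immediately preceding the statement). Your sketch---unweighted existence via Lemma~\ref{embedding-lemma} and the Caffarelli--Escauriaza $W^{2,q}$ theory for $q>n_0$, an approximation lemma comparing $u$ with the solution of the frozen convex operator using Evans--Krylov, a Vitali-type density/level-set estimate, and the passage to the weighted setting through the $A_{p/n_0}$ condition and the boundedness of $\mathcal{M}$ on $L^{p/n_0}_w$---is precisely the scheme carried out in that reference, so there is nothing to contrast.
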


We will use the following comparison principle for fully nonlinear operators, see \cite[Theorem 2.10]{CCKS}.

\begin{lemma}
\label{comparison-fully}
Suppose that $U$ is a bounded domain and that $f \in L^p(U)$, $1<p<\infty$.
Let $u_1, u_2 \in C(\overline{U})$ be supersolution and subsolution of the equation $F(x,D^2 u)=f$ in $U$, respectively, with $u_1 \geq u_2$ in $\partial U$.
Then we have $u_1 \geq u_2$ in $U$.
\end{lemma}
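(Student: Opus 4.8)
Although the statement is quoted from \cite[Theorem 2.10]{CCKS}, let me outline how one would prove it. The plan is to reduce the comparison principle to the Alexandrov--Bakelman--Pucci (ABP) maximum principle for a Pucci extremal operator with vanishing right-hand side. First I would set $v:=u_2-u_1\in C(\overline{U})$ and argue by contradiction: if $\sup_U v>0$, then since $v\le 0$ on $\partial U$, the positive part $v^+$ does not vanish identically and attains its maximum at an interior point of $U$.

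The crucial step is to show that $v$ is an $L^p$-viscosity subsolution of the extremal equation $\mathcal{M}^+_{\lambda,\Lambda}(D^2 v)\ge 0$ in $U$, where $\mathcal{M}^+_{\lambda,\Lambda}$ denotes the Pucci maximal operator associated with the ellipticity constants in \eqref{uniformly elliptic}. Formally this is immediate: uniform ellipticity yields $F(x,M)-F(x,N)\le\mathcal{M}^+_{\lambda,\Lambda}(M-N)$ for all $M,N\in S(n)$, and since $F(x,D^2u_2)\ge f\ge F(x,D^2u_1)$, the inhomogeneous terms cancel and one is left with $\mathcal{M}^+_{\lambda,\Lambda}(D^2u_2-D^2u_1)\ge 0$. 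The main obstacle is to make this rigorous when $u_1$ and $u_2$ are merely continuous, since viscosity inequalities cannot be subtracted directly: one regularizes $u_2$ by sup-convolution and $u_1$ by inf-convolution, exploits the second-order differentiability of these convolutions (Alexandrov's theorem) together with the stability of $L^p$-viscosity solutions under perturbations of the measurable ingredients, and passes to the limit. This is precisely the content of the ``difference lemma'' for $L^p$-viscosity solutions developed in \cite{CCKS}.

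Finally, the ABP maximum principle applied to the $L^p$-viscosity subsolution $v$ of $\mathcal{M}^+_{\lambda,\Lambda}(D^2 v)\ge 0$ gives $\sup_U v\le\sup_{\partial U}v^+$; since $v\le 0$ on $\partial U$, this forces $\sup_U v\le 0$, contradicting $\sup_U v>0$. Hence $v\le 0$, that is, $u_1\ge u_2$ in $U$. I note that if $u_1$ and $u_2$ were strong solutions, lying in $W^{2,p}_{\mathrm{loc}}(U)\cap C(\overline{U})$, the delicate second step would be unnecessary, since the differential inequality for $v$ would hold pointwise almost everywhere, and the proof would collapse to the classical ABP argument.
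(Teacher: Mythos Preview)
The paper does not supply its own proof of this lemma; it merely records the statement and cites \cite[Theorem~2.10]{CCKS}. Your sketch therefore goes beyond what the paper does, and the outline you give---reduce to the Pucci maximal operator via uniform ellipticity so that the $f$'s cancel, handle the subtraction of $L^p$-viscosity inequalities by sup/inf-convolution and stability as in \cite{CCKS}, then conclude with ABP---is indeed the argument of \cite{CCKS} and is correct in structure. One minor remark: in the ABP step you do not actually need the contradiction framing; once $\mathcal{M}^+_{\lambda,\Lambda}(D^2v)\ge 0$ is established in the $L^p$-viscosity sense, ABP gives $\sup_U v\le\sup_{\partial U}v^+\le 0$ directly.
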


We now state the first main result in this paper, the weighted $W^{2,p}$ estimate for the obstacle problem (\ref{obs prob fully}).

\begin{theorem}[Main Theorem 1]
\label{fully-obs-thm}
Let $n_0 < p < \infty$, where $n_0:=n-\nu_0$ for some $\nu_0 = \nu_0 \left( \frac{\Lambda}{\lambda},n \right) > 0$, and let $w \in A_{\frac{p}{n_0}}$.
Suppose that $\partial \Omega \in C^{1,1}$, $f \in L^p_w(\Omega)$ and $\psi \in W^{2,p}_w(\Omega)$.
Then there exists a small $\delta=\delta(n,\lambda,\Lambda,p,w,\partial \Omega)>0$ such that if
\begin{equation}
\sup_{x_0 \in \overline{\Omega}, 0 < r \leq R_0} \left( \mint_{B_r(x_0) \cap \Omega} \beta(x,x_0)^n \, dx \right)^{\frac{1}{n}} \leq \delta
\end{equation}
for some $R_0 > 0$, then the fully nonlinear obstacle problem (\ref{obs prob fully}) has a unique solution $u \in W^{2,p}_w(\Omega)$ with the estimate
\begin{equation}
\Norm{u}_{W^{2,p}_w(\Omega)} \leq c \left( \Norm{f}_{L^p_w(\Omega)} + \Norm{\psi}_{W^{2,p}_w(\Omega)} \right),
\end{equation}
for some positive constant $c=c(n,\lambda,\Lambda,p,w,\partial \Omega,\mathrm{diam}(\Omega),R_0)$.
\end{theorem}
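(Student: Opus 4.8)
The plan is to produce $u$ as the limit of solutions of a family of \emph{reference equations} carrying no penalty term, and to extract every property of the limit from the comparison principle of Lemma~\ref{comparison-fully}. The first move is to pass from weighted to unweighted data: by Lemma~\ref{embedding-lemma}, applied with the exponent $n_0$ of Lemma~\ref{fully nonlinear eq}, the hypotheses $f\in L^p_w(\Omega)$ and $\psi\in W^{2,p}_w(\Omega)$ give $f\in L^q(\Omega)$ and $\psi\in W^{2,q}(\Omega)$ for some $q>n_0>n/2$, so that $\psi$, and every function of $W^{2,p}_w(\Omega)\subset W^{2,q}(\Omega)$ produced below, is continuous on $\overline\Omega$; this is exactly what makes Lemma~\ref{comparison-fully} and the $L^p$-viscosity machinery usable. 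Set $g:=\min\{f,\,F(\,\cdot\,,D^2\psi)\}$, so that $g\le f$ and, by uniform ellipticity with $F(\,\cdot\,,0)\equiv0$, $|g|\le|f|+c\,\Norm{D^2\psi}$ pointwise, whence $g\in L^p_w(\Omega)$ with $\Norm{g}_{L^p_w(\Omega)}\le\Norm{f}_{L^p_w(\Omega)}+c\,\Norm{\psi}_{W^{2,p}_w(\Omega)}$. Fix a smooth nondecreasing $H_\varepsilon:\mr\to[0,1]$ with $H_\varepsilon\equiv0$ on $(-\infty,0]$ and $H_\varepsilon\equiv1$ on $[\varepsilon,\infty)$, and for $\varepsilon\in(0,1)$ consider
\[
F(x,D^2u_\varepsilon)=H_\varepsilon(u_\varepsilon-\psi)\,f+\bigl(1-H_\varepsilon(u_\varepsilon-\psi)\bigr)g\ \ \mathrm{in}\ \Omega,\qquad u_\varepsilon=0\ \ \mathrm{on}\ \partial\Omega .
\]
For each fixed $\varepsilon$ this has a solution $u_\varepsilon\in W^{2,p}_w(\Omega)$ by Schauder's fixed point theorem applied to $v\mapsto u$ on $C(\overline\Omega)$, where $u$ solves the same equation with $H_\varepsilon(v-\psi)$ in place of $H_\varepsilon(u_\varepsilon-\psi)$: the map is well defined by Lemma~\ref{fully nonlinear eq}, it sends a fixed ball of $C(\overline\Omega)$ into itself and is compact because the right-hand side is bounded by $|f|+c\,\Norm{D^2\psi}$ in $L^p_w(\Omega)$ and $W^{2,p}_w(\Omega)\hookrightarrow W^{2,q}(\Omega)\hookrightarrow\hookrightarrow C(\overline\Omega)$, and it is continuous by dominated convergence for the right-hand side together with the stability of solutions (the difference of two solutions is squeezed between the Pucci extremal operators applied to its Hessian and vanishes on $\partial\Omega$).

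Two facts about $u_\varepsilon$ survive the limit. First, since the right-hand side is a convex combination of $f$ and $g$, it is bounded in absolute value by $|f|+c\,\Norm{D^2\psi}$, so Lemma~\ref{fully nonlinear eq} gives the uniform bound $\Norm{u_\varepsilon}_{W^{2,p}_w(\Omega)}\le c\bigl(\Norm{f}_{L^p_w(\Omega)}+\Norm{\psi}_{W^{2,p}_w(\Omega)}\bigr)$. Second, $u_\varepsilon\ge\psi$ in $\Omega$: on the open set $V:=\{u_\varepsilon<\psi\}$ one has $H_\varepsilon(u_\varepsilon-\psi)=0$, hence $F(x,D^2u_\varepsilon)=g\le F(x,D^2\psi)$ a.e.\ in $V$, so $u_\varepsilon$ is a supersolution and $\psi$ a subsolution of $F(x,D^2\,\cdot\,)=F(x,D^2\psi)$ in $V$ with $u_\varepsilon=\psi$ on $\partial V$ (by continuity, and because $u_\varepsilon=0\ge\psi$ on $\partial\Omega$), so Lemma~\ref{comparison-fully} forces $u_\varepsilon\ge\psi$ in $V$ and $V=\emptyset$. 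A completely analogous comparison, using $g\le f$ and the monotonicity $H_{\varepsilon_1}\ge H_{\varepsilon_2}$ for $\varepsilon_1\le\varepsilon_2$, shows that $\varepsilon\mapsto u_\varepsilon$ is nondecreasing; hence $u_\varepsilon\downarrow u$ pointwise as $\varepsilon\to0$, and with the uniform bound this upgrades to $u_\varepsilon\rightharpoonup u$ weakly in $W^{2,p}_w(\Omega)$ and $u_\varepsilon\to u$ in $C(\overline\Omega)$. In particular $u\ge\psi$, $u=0$ on $\partial\Omega$, and $\Norm{u}_{W^{2,p}_w(\Omega)}\le c\bigl(\Norm{f}_{L^p_w(\Omega)}+\Norm{\psi}_{W^{2,p}_w(\Omega)}\bigr)$ by weak lower semicontinuity of the norm.

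It remains to check that $u$ solves (\ref{obs prob fully}) and is unique. For $F(x,D^2u)\le f$ a.e.\ I would use the convexity of $M\mapsto F(x,M)$: for every $0\le\phi\in C_c^\infty(\Omega)$ the functional $v\mapsto\int_\Omega F(x,D^2v)\,\phi\,dx$ is sequentially weakly lower semicontinuous on $W^{2,p}_w(\Omega)$, so
\[
\int_\Omega F(x,D^2u)\,\phi\,dx\ \le\ \liminf_{\varepsilon\to0}\int_\Omega F(x,D^2u_\varepsilon)\,\phi\,dx\ \le\ \int_\Omega f\,\phi\,dx,
\]
the last step because the right-hand side of the reference equation is $\le f$; hence $F(x,D^2u)\le f$ a.e. For the complementarity relation it is enough to prove $F(x,D^2u)=f$ a.e.\ on the open set $\{u>\psi\}$ (on $\{u=\psi\}$ the factor $u-\psi$ vanishes, while $D^2u=D^2\psi$ a.e.\ there, so $F(x,D^2u)=F(x,D^2\psi)\le f$ by the previous step — this is precisely why $g=\min\{f,F(\,\cdot\,,D^2\psi)\}$ was the right choice). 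On any $K\Subset\{u>\psi\}$ one has $\inf_K(u-\psi)>0$, so uniform convergence gives $u_\varepsilon-\psi\ge\varepsilon$ on $K$ for small $\varepsilon$, whence $H_\varepsilon(u_\varepsilon-\psi)=1$ and $F(x,D^2u_\varepsilon)=f$ a.e.\ in the interior of $K$; letting $\varepsilon\to0$ and using the stability of $L^p$-viscosity solutions under local uniform convergence (together with the fact that a $W^{2,q}$ function which is an $L^p$-viscosity solution satisfies the equation a.e.) yields $F(x,D^2u)=f$ a.e.\ on $\{u>\psi\}$. Finally, uniqueness again follows from Lemma~\ref{comparison-fully}: if $u_1,u_2$ both solve (\ref{obs prob fully}), then on $\{u_1>u_2\}$ one has $u_1>\psi$, hence $F(x,D^2u_1)=f\ge F(x,D^2u_2)$, so $u_2$ is a supersolution and $u_1$ a subsolution of the same equation with equal boundary values, forcing $u_2\ge u_1$ there; thus $\{u_1>u_2\}=\emptyset$, and by symmetry $u_1=u_2$.

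The crux of the argument is the passage to the limit on the non-coincidence set: weak $W^{2,p}_w$ convergence does not commute with the nonlinear operator $F$, convexity supplies only the inequality $F(x,D^2u)\le f$, and turning it into the equation $F(x,D^2u)=f$ on $\{u>\psi\}$ rests on the (standard but genuinely nontrivial) $L^p$-viscosity solution theory — stability under uniform limits and the equivalence with strong $W^{2,q}$ solutions — in the full range $p>n_0$ allowed by Lemma~\ref{fully nonlinear eq}; the Hessian bound itself, by contrast, is essentially free once the reference equation has been set up so that its right-hand side is controlled by $\Norm{f}_{L^p_w(\Omega)}+\Norm{\psi}_{W^{2,p}_w(\Omega)}$.
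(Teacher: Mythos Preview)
Your proposal is correct and takes essentially the same approach as the paper: your reference equation is literally the paper's (\ref{fully approx}), since with the paper's $g:=f-F(\cdot,D^2\psi)$ one has $f-g^+=\min\{f,F(\cdot,D^2\psi)\}$, which is your $g$, and $g^+\Phi_\varepsilon(u_\varepsilon-\psi)+f-g^+=H_\varepsilon(u_\varepsilon-\psi)f+(1-H_\varepsilon(u_\varepsilon-\psi))(f-g^+)$ with $H_\varepsilon=\Phi_\varepsilon$. The only differences are minor refinements on your side --- you establish $u_\varepsilon\ge\psi$ already at the $\varepsilon$-level by comparison (the paper instead passes to the limit first and applies Lemma~\ref{comparison-fully} to $u$ on $\{u<\psi\}$), and you add the monotonicity of $\varepsilon\mapsto u_\varepsilon$ to get convergence of the full family rather than a subsequence --- and you are more explicit than the paper about the reliance on $L^p$-viscosity stability/equivalence when passing the equation to the limit on $\{u>\psi\}$, a step the paper compresses to one line but which needs the same machinery.
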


\begin{proof}
First, in order to approximate the Heaviside function, we choose a non-decreasing smooth function $\Phi_{\varepsilon} \in C^{\infty}(\mr)$, see for instance \cite{Te, Ur}, satisfying
\begin{equation*}
\Phi_{\varepsilon}(s) \equiv 0 \quad \mathrm{if} \quad s \leq 0; \qquad \Phi_{\varepsilon}(s) \equiv 1 \quad \mathrm{if} \quad s \geq \varepsilon,
\end{equation*}
and
\begin{equation*}
0 \leq \Phi_{\varepsilon}(s) \leq 1, \ \forall s \in \mr.
\end{equation*}
Let $g(x):=f(x)-F(x, D^2 \psi(x))$ for $x \in \Omega$.
Since $f \in L^p_w(\Omega)$, $\psi \in W^{2,p}_w(\Omega)$ and $F$ is Lipschitz in $M$, we have $g \in L^p_w(\Omega)$ with the estimate
\begin{align*}
\Norm{g}_{L^p_w(\Omega)} & \leq c \left( \Norm{f}_{L^p_w(\Omega)} + \Norm{F(\cdot, D^2 \psi)}_{L^p_w(\Omega)} \right) \\
& \leq c \left( \Norm{f}_{L^p_w(\Omega)} + \Norm{\psi}_{W^{2,p}_w(\Omega)} \right).
\end{align*}
We write $g^+ = \max \{ g, 0 \}$ and $g^- = \max \{ -g, 0 \}$, and consider the following problem:
\begin{equation}
\label{fully approx}
\left\{\begin{array}{rclcc}\
F(x, D^2 u_{\varepsilon}) & = & g^+ \Phi_{\varepsilon}(u_{\varepsilon}-\psi) + f - g^+ & \mathrm{in} & \Omega,\\
u_{\varepsilon} & = & 0 & \mathrm{on} & \partial \Omega.
\end{array}\right.
\end{equation}
We note that the above problem (\ref{fully approx}) has a unique solution.
Indeed, according to Lemma \ref{fully nonlinear eq}, it follows that for each $v_0 \in L^p_w(\Omega)$, there is $v \in W^{2,p}_w(\Omega)$ satisfying
\begin{equation*}
\left\{\begin{array}{rclcc}\
F(x, D^2 v) & = & g^+ \Phi_{\varepsilon}(v_0-\psi) + f - g^+ & \mathrm{in} & \Omega,\\
v & = & 0 & \mathrm{on} & \partial \Omega.
\end{array}\right.
\end{equation*}
Since $0 \leq \Phi_{\varepsilon}(\cdot) \leq 1$, we can deduce from Lemma \ref{fully nonlinear eq} that
\begin{equation*}
\Norm{v}_{W^{2,p}_w(\Omega)} \leq R,
\end{equation*}
where $R$ is independent of $v_0$.
Defining $v=Sv_0$, we see that $S$ maps the $R$-ball in $L^p_w(\Omega)$ into itself and $S$ is compact, as $W^{2,p}_w(\Omega)$ is a compact subset of $L^p_w(\Omega)$ (see for instance \cite{Ki}).
By Schauder's fixed point theorem, there is $u_{\varepsilon}$ such that $u_{\varepsilon}=Su_{\varepsilon}$, which is the solution to the problem (\ref{fully approx}).

Now, it follows from Lemma \ref{fully nonlinear eq} that
\begin{align*}
\Norm{u_{\varepsilon}}_{W^{2,p}_w(\Omega)} & \leq c \left( \Norm{g^+  \Phi_{\varepsilon}(u_{\varepsilon}-\psi)}_{L^p_w(\Omega)} + \Norm{f}_{L^p_w(\Omega)} + \Norm{g^+}_{L^p_w(\Omega)} \right) \\
& \leq c \left( \Norm{g^+}_{L^p_w(\Omega)} + \Norm{f}_{L^p_w(\Omega)} + \Norm{g^+}_{L^p_w(\Omega)} \right) \\
& \leq c \left( \Norm{f}_{L^p_w(\Omega)} + \Norm{g}_{L^p_w(\Omega)} \right) \\
& \leq c \left( \Norm{f}_{L^p_w(\Omega)} + \Norm{\psi}_{W^{2,p}_w(\Omega)} \right).
\end{align*}
Hence $\{ u_{\varepsilon} \}$ is uniformly bounded in $W^{2,p}_w(\Omega)$.
Using Lemma \ref{embedding-lemma} and Sobolev embedding, we can find a subsequence $\{ u_{\varepsilon_k} \}_{k=1}^{\infty}$ with $\varepsilon_k \searrow 0$, and a function $u \in W^{2,p}_w(\Omega) \cap C^{\alpha}(\overline \Omega)$ such that $u_{\varepsilon_k}$ converges to $u$ weakly in $W^{2,p}_w(\Omega)$ and strongly in $C^{\alpha}(\overline{\Omega})$ for some $\alpha>0$.

Next, we claim that $u$ is a solution of the fully nonlinear obstacle problem (\ref{obs prob fully}).
First, we see that $u=0$ on the boundary $\partial \Omega$ since $u_{\varepsilon_k}$ uniformly converges to $u$ and $u_{\varepsilon_k}=0$ on $\partial \Omega$ for every $k$.
Also, we have from (\ref{fully approx}) that
\begin{equation*}
F(x, D^2 u_{\varepsilon_k}) = g^+ \Phi_{\varepsilon_k}(u_{\varepsilon_k}-\psi) - g^+ + f \leq f \ \quad \mathrm{in} \ \ \Omega,
\end{equation*}
for every $k$, and hence $F(x, D^2 u) \leq f$ in $\Omega$.

We now prove that $u \geq \psi$ in $\Omega$.
For each $m \in \mathbb{N}$, $\Phi_{\varepsilon_k}(u_{\varepsilon_k}-\psi)$ converges to $0$ on the set $\left\lbrace u < \psi - \frac{1}{m} \right\rbrace$, by the uniform convergence of the $u_{\varepsilon_k}$.
Therefore, $F(x, D^2 u) = f - g^+$ on the set $\left\lbrace u < \psi - \frac{1}{m} \right\rbrace$, for each $m \in \mathbb{N}$.
Since $\left\lbrace u < \psi \right\rbrace = \bigcup_{m=1}^{\infty} \left\lbrace u < \psi - \frac{1}{m} \right\rbrace$, we have $F(x, D^2 u) = f - g^+$ on the set $\left\lbrace u < \psi \right\rbrace$.
We note that $u, \psi \in C(\Omega)$, since $u, \psi \in W^{2,\widetilde{p}}(\Omega)$ for some $\widetilde{p} > \frac{n}{2}$ by Lemma \ref{embedding-lemma}.
Hence, $V:=\{u<\psi\}$ is an open set.
Now suppose that $V \neq \emptyset$.
From the definition of $g$, we have
\begin{equation*}
F(x, D^2 \psi) = f - g \ \quad \mathrm{in} \ \  V.
\end{equation*}
Also it is clear that
\begin{equation*}
F(x, D^2 u) = f - g^+\leq f - g \ \quad \mathrm{in} \ \ V,
\end{equation*}
and that
\begin{equation*}
u = \psi \ \quad \mathrm{on} \ \ \partial V.
\end{equation*}
Then we obtain from Lemma \ref{comparison-fully} that $u \geq \psi$ in $V$, which contradicts the definition of the set $V$.
We thus conclude that $V = \emptyset$ and $u \geq \psi$ in $\Omega$.

Finally, we prove that $F(x, D^2 u) = f$ on the set $\left\lbrace u > \psi \right\rbrace$.
To do this, observe that for each $m \in \mathbb{N}$, $\Phi_{\varepsilon_k}(u_{\varepsilon_k}-\psi)$ converges to $1$ almost everywhere on the set $\left\lbrace u > \psi + \frac{1}{m} \right\rbrace$.
Therefore, we obtain
\begin{equation*}
F(x, D^2 u) = g^+ + f - g^+ = f
\end{equation*}
on the set $\left\lbrace u > \psi \right\rbrace = \bigcup_{m=1}^{\infty} \left\lbrace u > \psi + \frac{1}{m} \right\rbrace$.

Consequently, $u \in W^{2,p}_w(\Omega)$ is a solution to (\ref{obs prob fully}) with the estimate
\begin{equation*}
\Norm{u}_{W^{2,p}_w(\Omega)} \leq \liminf_{k \to \infty} \Norm{u_{\varepsilon_k}}_{W^{2,p}_w(\Omega)} \leq c \left( \Norm{f}_{L^p_w(\Omega)} + \Norm{\psi}_{W^{2,p}_w(\Omega)} \right).
\end{equation*}

To show the uniqueness, let $u_1, u_2$ be two solutions to (\ref{obs prob fully}) and suppose that the open set $G := \{ u_2 > u_1 \}$ is nonempty.
Since $u_2 > u_1 \geq \psi$ in $G$, we know that $F(x, D^2 u_2) = f$ in $G$.
Therefore, we have
\begin{equation}
\left\{\begin{array}{rclcc}\
F(x, D^2 u_2) = f & \geq & F(x, D^2 u_1) & \mathrm{in} & G,\\
u_2 & = & u_1 & \mathrm{on} & \partial G.
\end{array}\right.
\end{equation}
By Lemma \ref{comparison-fully}, we get $u_2 \leq u_1$ in $G$, a contradiction.
Hence, the solution of (\ref{obs prob fully}) is unique.
This completes the proof.
\end{proof}

\section{\bf Elliptic linear obstacle problems}
\label{sec4}

We start with the small bounded mean oscillation (BMO) assumption on the coefficient matrix $\mathbf{A}$ for the linear obstacle problem (\ref{obs prob lin}).

\begin{definition}
We say that the coefficient matrix $\mathbf{A}$ is \textit{$(\delta,R)$-vanishing} if
\begin{equation}
\label{vanishing}
\sup_{0 < r \leq R} \sup_{y \in \mr^n} \left( \mint_{B_r(y)} |\mathbf{A}(x)-\overline{\mathbf{A}}_{B_r(y)}|^2 \, dx \right)^{\frac{1}{2}} \leq \delta,
\end{equation}
where $\overline{\mathbf{A}}_{B_r(y)} = \mint_{B_r(y)} \mathbf{A}(x) \, dx$ is the integral average of $\mathbf{A}$ on the ball $B_r(y)$.
\end{definition}

We note that one can take $R=1$ for simplicity, which is due to the scaling invariance property.
On the other hand, $\delta>0$ is invariant under such a scaling.
The assumption (\ref{vanishing}) on the coefficient matrix is weaker than the vanishing mean oscillation (VMO) or continuity assumption on the coefficient matrix, see \cite{BLP, BP1} for more details.

We next introduce the weighted $W^{2,p}$ estimates for linear elliptic equations without obstacle, see \cite{BL2}.

\begin{lemma}
\label{lin eq}
Let $2<p<\infty$ and let $w \in A_{\frac{p}{2}}$.
Suppose that $\partial \Omega \in C^{1,1}$ and $f \in
L^p_w(\Omega)$.
There exists a small $\delta=\delta(\Lambda,p,n,w,\partial \Omega)>0$ such that if $\mathbf{A}$ is uniformly elliptic and $(\delta,R)$-vanishing, then the problem
\begin{equation*}
\left\{ \begin{array}{rclcc}
a_{ij}D_{ij}u & = & f & \mathrm{in} & \Omega,\\
u & = & 0 & \mathrm{on} & \partial \Omega,
\end{array}\right.
\end{equation*}
has a unique solution $u \in W^{2,p}_w(\Omega)$ with the estimate
\begin{equation}
\Norm{u}_{W^{2,p}_w(\Omega)} \le c \Norm{f}_{L^p_w(\Omega)},
\end{equation}
for some positive constant $c=c(n,\lambda,\Lambda,p,w,\partial \Omega,\mathrm{diam}(\Omega))$.
\end{lemma}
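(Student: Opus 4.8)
The plan is to follow the now-standard Calder\'on--Zygmund scheme for nondivergence equations with small-BMO coefficients, upgraded to the weighted setting through the quantitative $A_\infty$ self-improvement of Muckenhoupt weights, and to obtain solvability afterwards by the method of continuity. I would first prove the a priori estimate $\Norm{u}_{W^{2,p}_w(\Omega)} \le c\Norm{f}_{L^p_w(\Omega)}$ for a solution $u \in W^{2,p}_w(\Omega)$, treating interior and boundary points by a single comparison argument. Fix a small cube (or ball) $B_r$; after flattening $\partial\Omega \in C^{1,1}$ in the boundary case, freeze the coefficients by setting $\overline{\mathbf{A}} = \overline{\mathbf{A}}_{B_{2r}}$ and letting $h$ solve the constant-coefficient problem $\overline{a}_{ij}D_{ij}h = 0$ on $B_r$ (resp. on a half-ball) with $h = u$ on the curved part of the boundary and $h = 0$ on the flat part. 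Then $h$ satisfies the classical interior/boundary bound $\Norm{D^2 h}_{L^\infty(B_{r/2})} \le c\, \mint_{B_r}|D^2 h|^2$, and $v := u - h$ solves $\overline{a}_{ij}D_{ij}v = f + (\overline{a}_{ij} - a_{ij})D_{ij}u$, so the $W^{2,2}$ estimate together with H\"older's inequality gives
\[
\left( \mint_{B_{r/2}} |D^2 v|^2 \right)^{1/2} \le c \left( \mint_{B_{2r}} |f|^2 \right)^{1/2} + c\,\delta^{\gamma} \left( \mint_{B_{2r}} |D^2 u|^{2+\sigma} \right)^{1/(2+\sigma)},
\]
where the last term exploits the $(\delta,R)$-vanishing hypothesis combined with the higher integrability (a Gehring-type reverse-H\"older lemma) of $D^2 u$ for the frozen problem.

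Combining the two ingredients above and choosing $N$ large and $\delta$ small, one arrives at the covering-lemma input: there exist $N > 1$ and $\epsilon \in (0,1)$ such that for every small cube $Q$,
\[
\big|\{\mathcal{M}(|D^2 u|^2) > N^2\} \cap Q\big| \le \epsilon\,|Q|
\]
whenever $\{\mathcal{M}(|D^2 u|^2) > 1\} \cap Q \ne \emptyset$ and $\{\mathcal{M}(|f|^2) \le \delta^2\} \cap Q \ne \emptyset$, where $\mathcal{M}$ denotes the relevant (restricted, and in the boundary case flattened) Hardy--Littlewood maximal operator. This is a Krylov--Safonov/Caffarelli-type density estimate; the interior version uses the interior $L^\infty$-Hessian bound for $h$, the boundary version the corresponding bound on half-balls.

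For the \emph{a priori estimate}, feed this density estimate into a Vitali covering argument and iterate over the dyadic levels $N^{2k}$. Since $w \in A_{p/2} \subset A_\infty$, there are $C_1 > 0$ and $\mu \in (0,1)$, depending only on $n$ and $[w]_{p/2}$, with $w(E) \le C_1 (|E|/|Q|)^\mu w(Q)$ for all measurable $E \subset Q$; applying this at each level yields geometric decay of $\sum_{k\ge1} N^{pk} w(\{\mathcal{M}(|D^2 u|^2) > N^{2k}\})$ in terms of $\Norm{f}_{L^p_w(\Omega)}^p$ plus lower-order terms in $u$ (finiteness of the analogous sum for $u$ being guaranteed by the a priori assumption $u \in W^{2,p}_w$). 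Because $w \in A_{p/2}$, $\mathcal{M}$ is bounded on $L^{p/2}_w$, so the left side dominates $\Norm{D^2 u}_{L^p_w(\Omega)}^p$; interpolation and $u = 0$ on $\partial\Omega$ absorb the lower-order contributions and produce $\Norm{u}_{W^{2,p}_w(\Omega)} \le c\Norm{f}_{L^p_w(\Omega)}$. For \emph{solvability} and \emph{uniqueness}, connect $\mathbf{A}$ to the identity by the family $t\mathbf{A} + (1-t)\mathbf{I}$, $t \in [0,1]$, which is uniformly elliptic and $(\delta,R)$-vanishing throughout; solvability at $t=0$ in $W^{2,p}_w(\Omega)$ is the classical weighted Calder\'on--Zygmund estimate for the Laplacian on a $C^{1,1}$ domain, and the a priori bound just established (uniform in $t$) propagates solvability to $t=1$ by the method of continuity. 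Uniqueness follows from the unweighted ABP/comparison estimate.

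The main obstacle is the density estimate: one must extract enough higher integrability of $D^2 u$ to turn the coefficient-oscillation smallness $\delta$ into a genuinely small factor in the comparison bound, and one must fit the interior and boundary constant-coefficient $L^\infty$-Hessian estimates into one covering scheme over cubes that may meet $\partial\Omega$ after flattening. Once this is in place, the passage to weighted norms is the routine $A_\infty$ bookkeeping indicated above, and the remaining steps are standard.
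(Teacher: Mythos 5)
You should first note that the paper does not prove this lemma at all: it is quoted verbatim from the reference [BL2] (Byun--Lee, \emph{On weighted $W^{2,p}$ estimates for elliptic equations with BMO coefficients in nondivergence form}), so the relevant comparison is with the method of that cited work. Your overall scheme --- comparison on balls/half-balls, a Krylov--Safonov/Caffarelli-type density estimate for level sets of $\mathcal{M}(|D^2u|^2)$, a Vitali covering iteration, the $A_\infty$ measure-decay property of $w\in A_{p/2}$ together with weighted boundedness of the maximal function, and finally the method of continuity --- is indeed the same general strategy used in that line of papers, so the architecture of your argument is sound.

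There is, however, a genuine gap at the key comparison step. You freeze the coefficients, write $\overline{a}_{ij}D_{ij}v=f+(\overline{a}_{ij}-a_{ij})D_{ij}u$ for $v=u-h$, and claim the bound
\begin{equation*}
\left( \mint_{B_{r/2}} |D^2 v|^2 \right)^{1/2} \le c \left( \mint_{B_{2r}} |f|^2 \right)^{1/2} + c\,\delta^{\gamma} \left( \mint_{B_{2r}} |D^2 u|^{2+\sigma} \right)^{1/(2+\sigma)},
\end{equation*}
justifying the last factor by ``a Gehring-type reverse-H\"older lemma'' for $D^2u$. For nondivergence equations this ingredient does not exist as stated: there is no Caccioppoli inequality bounding $\mint|D^2u|^2$ locally by lower-order quantities, so Gehring's lemma has no input and the self-improving integrability of the Hessian cannot be extracted from the equation in this elementary way. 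Without it, the perturbation term $\mint|\overline{a}_{ij}-a_{ij}|^2|D^2u|^2$ cannot be made small by the $(\delta,R)$-vanishing hypothesis (the oscillation is only small in an integral sense while $|D^2u|^2$ is merely $L^1$ on the ball), and the density estimate --- which you yourself identify as the main obstacle --- does not go through. The standard repair, and essentially what the cited works do, is to avoid freezing: compare $u$ with the solution $h$ of the \emph{same} variable-coefficient homogeneous equation $a_{ij}D_{ij}h=0$ on the ball (half-ball at the boundary) with $h=u$ on the curved part, and import the needed higher integrability of $D^2h$ (interior and boundary $W^{2,q}$ estimates for every $q<\infty$ under the small-BMO condition, i.e.\ Chiarenza--Frasca--Longo and Winter) from the \emph{unweighted} Calder\'on--Zygmund theory, while the difference $u-h$ is controlled by the unweighted $W^{2,2}$ estimate with right-hand side $f$ alone. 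If you prefer to keep frozen coefficients, then the higher integrability of $D^2u$ must itself be imported from that unweighted small-BMO theory as an explicit hypothesis-level ingredient; it cannot be derived from a reverse-H\"older argument. A smaller soft spot: uniqueness via ``the unweighted ABP/comparison estimate'' needs $u\in W^{2,n}_{\mathrm{loc}}$, which the weighted space $W^{2,p}_w$ with $w\in A_{p/2}$ only guarantees integrability slightly above $2$; uniqueness should instead be run through the unweighted small-BMO solvability theory, as in the cited reference.
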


We also need the following maximum principle for linear equations, see for instance \cite[Theorem 2.10]{CCKS} and \cite{CFL2}.

\begin{lemma}
\label{comparison-lin}
Suppose that $U$ is a bounded domain and that $\mathbf{A}=(a_{ij})$ is uniformly elliptic and $(\delta,R)$-vanishing.
If $u$ satisfies
\begin{equation*}
\left\{\begin{array}{rclcc}\
a_{ij}D_{ij}u & \leq & 0 & \mathrm{in} & U,\\
u & \geq & 0 & \mathrm{on} & \partial U,
\end{array}\right.
\end{equation*}
then $u \geq 0$ in $U$.
\end{lemma}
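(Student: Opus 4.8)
The statement is precisely the Aleksandrov--Bakelman--Pucci (ABP) maximum principle for supersolutions of a uniformly elliptic equation in nondivergence form, so the plan is to deduce it from the ABP estimate rather than to reprove that estimate. Note first that the $(\delta,R)$-vanishing hypothesis is not actually used here; only uniform ellipticity and measurability of $\mathbf{A}$ are needed.

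The argument runs as follows. Set $v:=-u$, so that $v$ is a subsolution: $a_{ij}D_{ij}v \ge 0$ in $U$ and $v \le 0$ on $\partial U$. Regardless of the precise function class in which $u$ is taken --- a strong solution $u \in W^{2,n}_{\mathrm{loc}}(U) \cap C(\overline U)$, or an $L^p$-viscosity solution in the admissible range as in \cite{CCKS} --- the ABP estimate yields
\[
\sup_U v \;\le\; \sup_{\partial U} v^+ \;+\; c\,\mathrm{diam}(U)\,\Norm{\bigl(a_{ij}D_{ij}v\bigr)^-}_{L^n(U)},
\]
with $c=c(n,\lambda,\Lambda)$; see \cite[Theorem~2.10]{CCKS} and \cite{CFL2}. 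Since $a_{ij}D_{ij}v \ge 0$ in $U$, the negative part $\bigl(a_{ij}D_{ij}v\bigr)^-$ vanishes identically, so the last term is zero; and since $v \le 0$ on $\partial U$, we have $\sup_{\partial U} v^+ = 0$. Therefore $\sup_U v \le 0$, that is, $u \ge 0$ in $U$, which is the claim.

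There is essentially no real obstacle in this proof: the only points deserving attention are to pin down the precise function class for $u$ so that the quoted form of the ABP estimate applies verbatim, and to interpret the boundary inequality $u \ge 0$ on $\partial U$ as a statement about the continuous representative of $u$, which is automatic once $u \in C(\overline U)$. If a self-contained argument were preferred over citing \cite{CCKS, CFL2}, one could run the classical sliding-paraboloid / convex-envelope argument directly, which again relies only on uniform ellipticity together with a contact-set area estimate, and terminates by the same sign considerations.
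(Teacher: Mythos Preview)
Your proposal is correct and matches the paper's treatment: the paper does not give a proof of this lemma at all but simply cites \cite[Theorem~2.10]{CCKS} and \cite{CFL2}, exactly the references you invoke, and your sketch via the ABP estimate is the standard way to unpack those citations. Your observation that the $(\delta,R)$-vanishing hypothesis is not needed here is also correct.
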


Now we state and prove the second main result in this paper, the global weighted $W^{2,p}$ estimate for the linear elliptic obstacle problem (\ref{obs prob lin}).

\begin{theorem}[Main Theorem 2]
\label{lin-obs-thm}
Let $2<p<\infty$ and let $w \in A_{\frac{p}{2}}$.
Suppose that $\partial \Omega \in C^{1,1}$, $f \in L^p_w(\Omega)$ and $\psi \in W^{2,p}_w(\Omega)$.
Then there exists a small $\delta=\delta(n,\lambda,\Lambda,p,w,\partial \Omega)>0$ such that if $\mathbf{A}$ is uniformly elliptic and $(\delta,R)$-vanishing, then there is a unique solution $u \in W^{2,p}_w(\Omega)$ to the obstacle problem (\ref{obs prob lin}) with the estimate
\begin{equation}
\Norm{u}_{W^{2,p}_w(\Omega)} \leq c \left( \Norm{f}_{L^p_w(\Omega)} + \Norm{\psi}_{W^{2,p}_w(\Omega)} \right),
\end{equation}
for some positive constant $c=c(n,\lambda,\Lambda,p,w,\partial \Omega,\mathrm{diam}(\Omega))$.
\end{theorem}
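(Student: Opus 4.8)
The plan is to mirror the proof of Theorem \ref{fully-obs-thm}, replacing the fully nonlinear ingredients (Lemmas \ref{fully nonlinear eq}, \ref{comparison-fully}) with their linear counterparts (Lemmas \ref{lin eq}, \ref{comparison-lin}) and adjusting the Muckenhoupt exponent from $p/n_0$ to $p/2$. First I would set $g(x) := f(x) - a_{ij}(x) D_{ij}\psi(x)$; since $f \in L^p_w(\Omega)$, $\psi \in W^{2,p}_w(\Omega)$ and $a_{ij} \in L^\infty$ with $|a_{ij}| \le \Lambda$, we get $g \in L^p_w(\Omega)$ with $\Norm{g}_{L^p_w(\Omega)} \le c(\Norm{f}_{L^p_w(\Omega)} + \Norm{\psi}_{W^{2,p}_w(\Omega)})$. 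Writing $g^+ = \max\{g,0\}$, I would then introduce the approximating problems
\begin{equation}
\label{lin approx}
\left\{\begin{array}{rclcc}\
a_{ij}(x) D_{ij} u_\varepsilon & = & g^+ \Phi_\varepsilon(u_\varepsilon - \psi) + f - g^+ & \mathrm{in} & \Omega,\\
u_\varepsilon & = & 0 & \mathrm{on} & \partial \Omega,
\end{array}\right.
\end{equation}
where $\Phi_\varepsilon$ is the smoothed Heaviside function from the previous proof.

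Next I would establish solvability of \eqref{lin approx} for each fixed $\varepsilon>0$ by a Schauder fixed point argument: for $v_0 \in L^p_w(\Omega)$, Lemma \ref{lin eq} produces a unique $v = S v_0 \in W^{2,p}_w(\Omega)$ solving $a_{ij} D_{ij} v = g^+ \Phi_\varepsilon(v_0 - \psi) + f - g^+$ with $v = 0$ on $\partial\Omega$, and since $0 \le \Phi_\varepsilon \le 1$ the right-hand side is bounded in $L^p_w(\Omega)$ by $c(\Norm{f}_{L^p_w(\Omega)} + \Norm{g^+}_{L^p_w(\Omega)})$ independently of $v_0$, so $S$ maps an $R$-ball of $L^p_w(\Omega)$ into itself; the compact embedding $W^{2,p}_w(\Omega) \hookrightarrow L^p_w(\Omega)$ gives compactness of $S$, and Schauder's theorem yields a fixed point $u_\varepsilon$. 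Applying Lemma \ref{lin eq} once more to $u_\varepsilon$ gives the uniform bound $\Norm{u_\varepsilon}_{W^{2,p}_w(\Omega)} \le c(\Norm{f}_{L^p_w(\Omega)} + \Norm{\psi}_{W^{2,p}_w(\Omega)})$. Using that $w \in A_{p/2}$ forces $p/2 > 1$, hence (via Lemma \ref{embedding-lemma} with $n_0 = 2$, noting $p > 2 > n/2$ is not automatic but $W^{2,p}_w \hookrightarrow W^{2,\tilde p}$ for some $\tilde p > n/2$ does hold after the embedding lemma) and Sobolev embedding, I extract a subsequence $u_{\varepsilon_k} \rightharpoonup u$ weakly in $W^{2,p}_w(\Omega)$ and strongly in $C^\alpha(\overline\Omega)$.

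Then I would verify $u$ solves \eqref{obs prob lin}: the boundary condition passes to the limit by uniform convergence; from \eqref{lin approx}, $a_{ij} D_{ij} u_{\varepsilon_k} = g^+ \Phi_{\varepsilon_k}(u_{\varepsilon_k} - \psi) + f - g^+ \le f$, so $a_{ij} D_{ij} u \le f$ a.e.\ in $\Omega$ (using weak $W^{2,p}_w$ convergence). On $\{u > \psi + 1/m\}$, uniform convergence gives $\Phi_{\varepsilon_k}(u_{\varepsilon_k}-\psi) \to 1$, hence $a_{ij} D_{ij} u = f$ there, and taking the union over $m$ gives $a_{ij} D_{ij} u = f$ on $\{u > \psi\}$; similarly $\Phi_{\varepsilon_k} \to 0$ on $\{u < \psi - 1/m\}$ gives $a_{ij} D_{ij} u = f - g^+$ on $V := \{u < \psi\}$, which is open since $u, \psi$ are continuous. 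If $V \neq \emptyset$, then in $V$ we have $a_{ij} D_{ij}(u - \psi) = (f - g^+) - (f - g) = g - g^+ = -g^- \le 0$ with $u - \psi = 0$ on $\partial V$, so Lemma \ref{comparison-lin} yields $u - \psi \ge 0$ in $V$, contradicting the definition of $V$; hence $u \ge \psi$ in $\Omega$. Combining the three facts, $u$ solves \eqref{obs prob lin}, and $\Norm{u}_{W^{2,p}_w(\Omega)} \le \liminf_k \Norm{u_{\varepsilon_k}}_{W^{2,p}_w(\Omega)} \le c(\Norm{f}_{L^p_w(\Omega)} + \Norm{\psi}_{W^{2,p}_w(\Omega)})$. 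For uniqueness, if $u_1, u_2$ both solve \eqref{obs prob lin} and $G := \{u_2 > u_1\} \neq \emptyset$, then $u_2 > u_1 \ge \psi$ in $G$ forces $a_{ij} D_{ij} u_2 = f$ in $G$, so $a_{ij} D_{ij}(u_2 - u_1) = f - a_{ij} D_{ij} u_1 \ge f - f = 0$... more precisely $a_{ij} D_{ij}(u_1 - u_2) = a_{ij} D_{ij} u_1 - f \le 0$ in $G$ with $u_1 - u_2 = 0$ on $\partial G$, so Lemma \ref{comparison-lin} gives $u_1 \ge u_2$ in $G$, a contradiction.

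The main obstacle I anticipate is the continuity/embedding step: unlike the fully nonlinear case where $n_0 = n - \nu_0$ and Lemma \ref{embedding-lemma} directly upgrades $W^{2,p}_w$ to a subcritical $L^q$, here $n_0 = 2$ and I must check that $w \in A_{p/2}$ with $p > 2$ still allows Lemma \ref{embedding-lemma} to place $D^2 u_\varepsilon$ in some $L^{q}(\Omega)$ with $q > n/2$ so that $u_\varepsilon \in W^{2,q}(\Omega) \hookrightarrow C^{1,\beta}(\overline\Omega)$, giving the equicontinuity needed to pass $\Phi_{\varepsilon_k}(u_{\varepsilon_k} - \psi)$ to the limit pointwise and to make $\{u < \psi\}$, $\{u > \psi\}$ open. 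If $p$ is only slightly above $2$ and $n$ is large this requires care; one resolves it by noting $\kappa$ in Lemma \ref{embedding-lemma} can be taken so that the target exponent $\frac{2p}{p - 2\kappa}$ exceeds $n/2$ whenever $p > 2$, or more simply by first invoking the unweighted $W^{2,\tilde p}$ theory for $\tilde p$ slightly below $p$ (which holds since $L^p_w \hookrightarrow L^{\tilde p}$ locally for $w \in A_{p/2}$) — in any case this is the one place where the linear argument genuinely differs from the fully nonlinear template and deserves an explicit remark.
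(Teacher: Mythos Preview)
Your template is right, but the gap you flag at the end is real and your proposed fixes do not close it. The constant $\kappa$ in Lemma~\ref{embedding-lemma} is dictated by the reverse H\"older exponent of $w\in A_{p/2}$ and can be arbitrarily small; with $n_0=2$ the target exponent $\tfrac{2p}{p-2\kappa}$ is only slightly above $2$, so for $n\geq 5$ and $p$ close to $2$ it will not exceed $n/2$. The same obstruction kills the ``$L^p_w\hookrightarrow L^{\tilde p}$'' route. Hence neither $u$ nor $\psi$ need be continuous, the set $V=\{u<\psi\}$ need not be open, and Lemma~\ref{comparison-lin} cannot be invoked on $V$ as you do. Your passage to the limit in $C^\alpha(\overline\Omega)$, and your handling of the boundary condition via uniform convergence, fail for the same reason.

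The paper sidesteps this by \emph{mollifying} the coefficients, the obstacle and the data before running the penalized problem. One first extends $\psi$ to $\overline\psi$ on $\mr^n$, sets $a_{ij}^\varepsilon=a_{ij}\ast\varphi_\varepsilon$, $\overline\psi_\varepsilon=\overline\psi\ast\varphi_\varepsilon$, $f_\varepsilon=f\ast\varphi_\varepsilon$, $g_\varepsilon=f_\varepsilon-a_{ij}^\varepsilon D_{ij}\overline\psi_\varepsilon$, and solves $a_{ij}^\varepsilon D_{ij}u_\varepsilon=g_\varepsilon^+\Phi_\varepsilon(u_\varepsilon-\overline\psi_\varepsilon)+f_\varepsilon-g_\varepsilon^+$ in $\Omega$ with $u_\varepsilon=0$ on $\partial\Omega$. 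The right-hand side is now smooth and bounded, so $u_\varepsilon\in W^{2,q}(\Omega)$ for every $q$ and in particular is continuous; thus $V_k:=\{u_{\varepsilon_k}<\overline\psi_{\varepsilon_k}\}$ is open and the comparison principle can be applied \emph{at the $\varepsilon_k$-level}, giving $u_{\varepsilon_k}\geq\overline\psi_{\varepsilon_k}$ for every $k$ before any limit is taken. One then passes to the limit using weak $W^{2,p}_w$ convergence together with a.e.\ convergence (extracted via compactness in $W^{1,2}_0(\Omega)$, which also delivers the boundary condition $u=0$ on $\partial\Omega$ without needing $C^\alpha$ convergence). The remaining steps---the inequality $a_{ij}D_{ij}u\le f$, the equality on $\{u>\psi\}$, and the uniqueness argument---go through exactly as you wrote.
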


\begin{proof}
Since $\partial \Omega \in C^{1,1}$, there exists an extension $\overline{\psi}$ of $\psi$ to $\mr^n$ with $\overline{\psi}=\psi$ a.e. in $\Omega$, and
\begin{equation}
\Norm{\overline{\psi}}_{W^{2,p}_w(\mr^n)} \leq c \Norm{\psi}_{W^{2,p}_w(\Omega)},
\end{equation}
for some constant $c$ depending only on $n,p,w,\partial \Omega$ and $\mathrm{diam}(\Omega)$, see \cite{Ch}.
Let $g=f-a_{ij}D_{ij}\overline{\psi}$ in $\mr^n$ (we extend $f$ to zero outside $\Omega$).
Since $f \in L^p_w(\mr^n)$ and $\overline{\psi} \in W^{2,p}_w(\mr^n)$, we have $g \in L^p_w(\mr^n)$ with the estimate
\begin{align*}
\Norm{g}_{L^p_w(\mr^n)} & \leq c \left( \Norm{f}_{L^p_w(\mr^n)} + \Norm{\overline{\psi}}_{W^{2,p}_w(\mr^n)} \right) \\
& \leq c \left( \Norm{f}_{L^p_w(\Omega)} + \Norm{\psi}_{W^{2,p}_w(\Omega)} \right).
\end{align*}
Now let $\varphi$ denote a standard mollifier with support in $B_1$, and set $\varphi_{\varepsilon}(x):=\varepsilon^{-n}\varphi(x/\varepsilon)$.
We define the usual regularizations $a_{ij}^{\varepsilon} := a_{ij} \ast \varphi_{\varepsilon}$, $\overline{\psi}_{\varepsilon} := \overline{\psi} \ast \varphi_{\varepsilon}$, $f_{\varepsilon} := f \ast \varphi_{\varepsilon}$ and $g_{\varepsilon} := f_{\varepsilon} - a_{ij}^{\varepsilon} D_{ij}\overline{\psi}_{\varepsilon}$.
We note that for each $\varepsilon>0$, the matrix $(a_{ij}^{\varepsilon}) : \mr^n \rightarrow \mr^{n \times n}$ is uniformly elliptic with the same ellipticity constants.
Furthermore, $g_{\varepsilon} \to g$ almost everywhere, as $\varepsilon \to 0$, and
\begin{align*}
\Norm{g_{\varepsilon}}_{L^p_w(\mr^n)} & \leq c \left( \Norm{f_{\varepsilon}}_{L^p_w(\mr^n)} + \Norm{\overline{\psi}_{\varepsilon}}_{W^{2,p}_w(\mr^n)} \right) \\
& \leq c \left( \Norm{f}_{L^p_w(\mr^n)} + \Norm{\overline{\psi}}_{W^{2,p}_w(\mr^n)} \right) \\
& \leq c \left( \Norm{f}_{L^p_w(\Omega)} + \Norm{\psi}_{W^{2,p}_w(\Omega)} \right).
\end{align*}
Let $\Phi_{\varepsilon}(s)$ be the function in the proof of Theorem \ref{fully-obs-thm}.
We then consider the problem:
\begin{equation}
\label{lin approx}
\left\{\begin{array}{rclcc}\
a_{ij}^{\varepsilon} D_{ij}u_{\varepsilon} & = & g_{\varepsilon}^+ \Phi_{\varepsilon}(u_{\varepsilon}-\overline{\psi}_{\varepsilon}) + f_{\varepsilon} - g_{\varepsilon}^+ & \mathrm{in} & \Omega,\\
u_{\varepsilon} & = & 0 & \mathrm{on} & \partial \Omega.
\end{array}\right.
\end{equation}
According to Lemma \ref{lin eq}, for each $v_0 \in L^p_w(\Omega)$ there is $v \in W^{2,p}_w(\Omega)$ for which
\begin{equation*}
\left\{\begin{array}{rclcc}\
a_{ij}^{\varepsilon} D_{ij}v & = & g_{\varepsilon}^+ \Phi_{\varepsilon}(v_0-\overline{\psi}_{\varepsilon}) + f_{\varepsilon} - g_{\varepsilon}^+ & \mathrm{in} & \Omega,\\
v & = & 0 & \mathrm{on} & \partial \Omega.
\end{array}\right.
\end{equation*}
By the fact that $0 \leq \Phi_{\varepsilon}(\cdot) \leq 1$ and Lemma \ref{lin eq}, we find that
\begin{equation*}
\Norm{v}_{W^{2,p}_w(\Omega)} \leq R,
\end{equation*}
where $R$ is independent of $v_0$.
We set $v=Sv_0$.
Then we see that $S$ maps the $R$-ball in $L^p_w(\Omega)$ into itself and $S$ is compact.
It follows from Schauder's fixed point theorem that there is the unique $u_{\varepsilon}$ such that $u_{\varepsilon}=Su_{\varepsilon}$, which is the solution to the problem (\ref{lin approx}).
Lemma \ref{lin eq} now yields
\begin{align*}
\Norm{u_{\varepsilon}}_{W^{2,p}_w(\Omega)} & \leq c \left( \Norm{g_{\varepsilon}^+ \Phi_{\varepsilon}(u_{\varepsilon}-\overline{\psi}_{\varepsilon})}_{L^p_w(\Omega)} + \Norm{f_{\varepsilon}}_{L^p_w(\Omega)} + \Norm{g_{\varepsilon}^+}_{L^p_w(\Omega)} \right) \\
& \leq c \left( \Norm{g_{\varepsilon}^+}_{L^p_w(\Omega)} + \Norm{f_{\varepsilon}}_{L^p_w(\Omega)} + \Norm{g_{\varepsilon}^+}_{L^p_w(\Omega)} \right) \\
& \leq c \left( \Norm{f_{\varepsilon}}_{L^p_w(\Omega)} + \Norm{g_{\varepsilon}}_{L^p_w(\Omega)} \right) \\
& \leq c \left( \Norm{f}_{L^p_w(\mr^n)} + \Norm{g_{\varepsilon}}_{L^p_w(\mr^n)} \right) \\
& \leq c \left( \Norm{f}_{L^p_w(\Omega)} + \Norm{\psi}_{W^{2,p}_w(\Omega)} \right).
\end{align*}
Hence $\{ u_{\varepsilon} \}$ is uniformly bounded in $W^{2,p}_w(\Omega) \cap W^{1,2}_0(\Omega)$.
So we can find a subsequence $\{ u_{\varepsilon_k} \}_{k=1}^{\infty}$ with $\varepsilon_k \searrow 0$, and a function $u \in W^{2,p}_w(\Omega) \cap W^{1,2}_0(\Omega)$ such that $u_{\varepsilon_k}$ converges to $u$ weakly in $W^{2,p}_w(\Omega) \cap W^{1,2}_0(\Omega)$, and $u_{\varepsilon_k}$ converges to $u$ almost everywhere, as $\epsilon_k \rightarrow 0$.

We next claim that $u$ is a solution of the obstacle problem (\ref{obs prob lin}).
Since $u \in W^{1,2}_0(\Omega)$, $u=0$ on $\partial \Omega$.
It also follows from (\ref{lin approx}) that
\begin{equation*}
a_{ij}^{\varepsilon_k} D_{ij}u_{\varepsilon_k} = g_{\varepsilon_k}^+ \Phi_{\varepsilon_k}(u_{\varepsilon_k}-\overline{\psi}_{\varepsilon_k}) - g_{\varepsilon_k}^+ + f_{\varepsilon_k} \leq f_{\varepsilon_k} \ \quad \mathrm{in} \ \ \Omega,
\end{equation*}
for every $k$.
Passing to the limit $k \to \infty$, we obtain that $a_{ij}D_{ij}u \leq f$ a.e. in $\Omega$.

We now show that $u \geq \psi$ in $\Omega$.
To do this, fix $k \in \mathbb{N}$, and the note that
$\Phi_{\varepsilon_k}(u_{\varepsilon_k}-\overline{\psi}_{\varepsilon_k})=0$
on the set $V_k := \left\lbrace u_{\varepsilon_k} < \overline{\psi}_{\varepsilon_k} \right\rbrace$.
Hence, $a_{ij}^{\varepsilon_k} D_{ij}u_{\varepsilon_k} = f_{\varepsilon_k} - g_{\varepsilon_k}^+$ in $V_k$.
If $V_k \neq \emptyset$, then it follows from the definition of $g_{\varepsilon_k}$ that
\begin{equation*}
a_{ij}^{\varepsilon_k} D_{ij}u_{\varepsilon_k} = f_{\varepsilon_k} - g_{\varepsilon_k}^+ = f_{\varepsilon_k} - g_{\varepsilon_k} - g_{\varepsilon_k}^- = a_{ij}^{\varepsilon_k} D_{ij}\overline{\psi}_{\varepsilon_k} - g_{\varepsilon_k}^- \leq a_{ij}^{\varepsilon_k} D_{ij}\overline{\psi}_{\varepsilon_k} \ \quad \mathrm{in} \ \ V_k.
\end{equation*}
Since $u_{\varepsilon_k}=\overline{\psi}_{\varepsilon_k}$ on $\partial V_k$, we discover that
\begin{equation}
\left\{\begin{array}{rclcc}\
a_{ij}^{\varepsilon_k} D_{ij}(u_{\varepsilon_k} - \overline{\psi}_{\varepsilon_k}) & \leq & 0 & \mathrm{in} & V_k,\\
u_{\varepsilon_k} - \overline{\psi}_{\varepsilon_k} & \geq & 0 & \mathrm{on} & \partial V_k.
\end{array}\right.
\end{equation}
Then in light of Lemma \ref{comparison-lin}, $u_{\varepsilon_k} - \overline{\psi}_{\varepsilon_k} \geq 0$ in $V_k$, which contradicts the definition of the set $V_k$, and we conclude that $V_k = \emptyset$ and $u_{\varepsilon_k} \geq \overline{\psi}_{\varepsilon_k}$ in $\Omega$.
But then since $k \in \mathbb{N}$ is arbitrary, passing to the limit $k \to \infty$, we discover that $u \geq \overline{\psi}$ a.e. in $\Omega$.
Therefore, $u \geq \psi$ a.e. in $\Omega$, as $\overline{\psi}=\psi$ a.e. in $\Omega$.

We next show that $a_{ij}D_{ij}u = f$ on the set $\left\lbrace u > \psi \right\rbrace$.
Observe that for each $m \in \mathbb{N}$, $\Phi_{\varepsilon_k}(u_{\varepsilon_k}-\overline{\psi}_{\varepsilon_k})$ converges to $1$ almost everywhere on the set $\left\lbrace u > \overline{\psi} + \frac{1}{m} \right\rbrace$, to find
\begin{equation*}
a_{ij}D_{ij}u = g^+ + f - g^+ = f
\end{equation*}
on the set $\left\lbrace u > \psi \right\rbrace = \left\lbrace u > \overline{\psi} \right\rbrace = \bigcup_{m=1}^{\infty} \left\lbrace u > \overline{\psi} + \frac{1}{m} \right\rbrace$.
Consequently, $u \in W^{2,p}_w(\Omega)$ is a solution to (\ref{obs prob fully}) with the estimate
\begin{equation}
\Norm{u}_{W^{2,p}_w(\Omega)} \leq \liminf_{k \to \infty} \Norm{u_{\varepsilon_k}}_{W^{2,p}_w(\Omega)} \leq c \left( \Norm{f}_{L^p_w(\Omega)} + \Norm{\psi}_{W^{2,p}_w(\Omega)} \right).
\end{equation}

Now it remains to prove the uniqueness.
Let $u_1, u_2$ be solutions to (\ref{obs prob lin}) and assume that the open set $G := \{ u_2 > u_1 \}$ is nonempty.
Since $u_2 > u_1 \geq \psi$ in $G$, we know that $a_{ij}D_{ij}u_2 = f$ in $G$.
Therefore, we have
\begin{equation*}
a_{ij}D_{ij}(u_2-u_1) = f - a_{ij}D_{ij}u_1 \geq 0 \ \quad \mathrm{in} \ \ G
\end{equation*}
and
\begin{equation*}
u_2 = u_1 \ \quad \mathrm{on} \ \ \partial G.
\end{equation*}
Then Lemma \ref{comparison-lin} implies that $u_2-u_1 \leq 0$ in $G$, which is a contradiction, and therefore $u_1=u_2$.
This finishes the proof.
\end{proof}

\section{\bf Morrey regularity results and H\"{o}lder continuity of the gradient}
\label{sec5}

The \emph{Morrey space} $L^{p,\theta}(\Omega)$ with $p \in [1,\infty)$ and $\theta \in [0,n]$ consists of all measurable functions $g \in L^p(\Omega)$ for which the norm
\begin{equation*}
\Norm{g}_{L^{p,\theta}(\Omega)}:=\left( \sup_{y \in \Omega, r>0} \frac{1}{r^{\theta}}\int_{B_r(y) \cap \Omega} |g(x)|^p \ dx \right)^{\frac{1}{p}}
\end{equation*}
is finite.
The \emph{Sobolev-Morrey space} $W^{2,p,\theta}(\Omega)$ consists of all functions $g \in W^{2,p}(\Omega)$ such that the second order derivatives belongs to the Morrey space $L^{p,\theta}(\Omega)$.
A natural norm of this space is defined by
\begin{equation*}
\Norm{g}_{W^{2,p,\theta}(\Omega)} := \Norm{g}_{L^p(\Omega)}+\Norm{D^2 g}_{L^{p,\theta}(\Omega)}.
\end{equation*}
We note that for $p \in [1,\infty)$, $L^{p,0}(\Omega) \cong L^p(\Omega)$ and $L^{p,n}(\Omega) \cong L^{\infty}(\Omega)$.
Hence, we deal with only the case $0 < \theta < n$ in this section.

We now state and prove the main results in this section, the Morrey regularity results for the elliptic obstacle problems (\ref{obs prob fully}) and (\ref{obs prob lin}).

\begin{theorem}
\label{fully-obs-morrey}
Let $n_0 < p < \infty$, where $n_0:=n-\nu_0$ for some $\nu_0 = \nu_0 \left( \frac{\Lambda}{\lambda},n \right) > 0$, and let $0 < \theta < n$.
Suppose that $\partial \Omega \in C^{1,1}$, $f \in L^{p,\theta}(\Omega)$ and $\psi \in W^{2,p,\theta}(\Omega)$.
Then there exists a small $\delta=\delta(n,\lambda,\Lambda,p,\theta,\partial \Omega)>0$ such that if (\ref{con beta}) is satisfied for some $R_0 > 0$, then the fully nonlinear obstacle problem (\ref{obs prob fully}) has a unique solution $u \in W^{2,p,\theta}(\Omega)$ with the estimate
\begin{equation}
\label{5-1}
\Norm{u}_{W^{2,p,\theta}(\Omega)} \leq c \left( \Norm{f}_{L^{p,\theta}(\Omega)} + \Norm{\psi}_{W^{2,p,\theta}(\Omega)} \right),
\end{equation}
for some positive constant $c=c(n,\lambda,\Lambda,p,\theta,\partial \Omega,\mathrm{diam}(\Omega),R_0)$.
\end{theorem}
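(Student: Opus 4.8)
The plan is to transfer the already-established weighted $W^{2,p}$ theory into the Morrey scale via the classical device connecting Morrey spaces with Muckenhoupt weights. The key observation, already hinted at in the introduction, is that if $M\chi_{B_\rho(y)}$ denotes the Hardy--Littlewood maximal function of the characteristic function of a ball, then $w_{y,\rho}:=\left(M\chi_{B_\rho(y)}\right)^{\alpha}$ lies in the Muckenhoupt class $A_1\subset A_{p/n_0}$ for a suitable exponent $\alpha=\alpha(\theta,n)\in(0,1)$, with an $A_{p/n_0}$ constant bounded \emph{independently of} $y$ and $\rho$ (see \cite{CR}). Moreover $w_{y,\rho}\equiv 1$ on $B_\rho(y)$ and $w_{y,\rho}(x)\sim (\rho/|x-y|)^{n\alpha}$ away from the ball, so that for any $g$,
\begin{equation*}
\int_{B_\rho(y)\cap\Omega}|g|^p\,dx\le\int_\Omega |g|^p w_{y,\rho}\,dx\le c\sum_{j\ge 0}2^{-jn\alpha}\rho^{-n\alpha}\int_{(B_{2^{j+1}\rho}(y)\setminus B_{2^j\rho}(y))\cap\Omega}|x-y|^{n\alpha}\cdots
\end{equation*}
which, after choosing $n\alpha=n-\theta$, telescopes into $c\,\rho^{\theta}\Norm{g}_{L^{p,\theta}(\Omega)}^p$. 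In other words, $\Norm{g}_{L^p_{w_{y,\rho}}(\Omega)}\le c\,\rho^{\theta/p}\Norm{g}_{L^{p,\theta}(\Omega)}$ uniformly in $y,\rho$, and conversely $\Norm{g}_{L^{p,\theta}(\Omega)}$ is dominated by the supremum over $y,\rho$ of $\rho^{-\theta/p}\Norm{g}_{L^p_{w_{y,\rho}}(\Omega)}$.

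The proof then proceeds as follows. First, since $f\in L^{p,\theta}(\Omega)\subset L^p(\Omega)$ and $\psi\in W^{2,p,\theta}(\Omega)\subset W^{2,p}(\Omega)$, Theorem \ref{fully-obs-thm} (applied with the trivial weight $w\equiv 1\in A_{p/n_0}$) already produces a unique solution $u\in W^{2,p}(\Omega)$ to (\ref{obs prob fully}). It remains only to prove the Morrey bound (\ref{5-1}) on $D^2u$, and here one must be careful that the global estimate in Theorem \ref{fully-obs-thm} is not directly of the desired localized form; the natural route is to re-run the approximation scheme of Theorem \ref{fully-obs-thm} but measure everything in the weighted norm $L^p_{w_{y,\rho}}$. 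Concretely, fix $y\in\overline\Omega$ and $0<\rho\le\mathrm{diam}(\Omega)$, set $w=w_{y,\rho}$, note that $\delta$ in Theorem \ref{fully-obs-thm} depends on $w$ only through $n,p$ and $[w]_{p/n_0}$, which is bounded uniformly in $y,\rho$, so a single $\delta=\delta(n,\lambda,\Lambda,p,\theta,\partial\Omega)$ works for all these weights; then the uniqueness part of Theorem \ref{fully-obs-thm} identifies the weighted solution with our $u$, and its estimate gives
\begin{equation*}
\Norm{D^2 u}_{L^p_{w}(\Omega)}\le c\left(\Norm{f}_{L^p_w(\Omega)}+\Norm{\psi}_{W^{2,p}_w(\Omega)}\right)\le c\,\rho^{\theta/p}\left(\Norm{f}_{L^{p,\theta}(\Omega)}+\Norm{\psi}_{W^{2,p,\theta}(\Omega)}\right),
\end{equation*}
where the second inequality uses the Morrey--weight comparison above (for the lower-order terms of $\Norm{\psi}_{W^{2,p}_w}$ one uses $w\le (M\chi_{B_\rho})^\alpha\le 1$ is false in general, so instead one absorbs $\Norm{\psi}_{L^p_w}+\Norm{D\psi}_{L^p_w}$ into $c\,\mathrm{diam}(\Omega)^{\theta/p}\Norm{\psi}_{W^{2,p,\theta}}$ by the same telescoping argument applied at the largest scale). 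Since $w\equiv 1$ on $B_\rho(y)$,
\begin{equation*}
\frac{1}{\rho^\theta}\int_{B_\rho(y)\cap\Omega}|D^2u|^p\,dx\le\frac{1}{\rho^\theta}\int_\Omega|D^2u|^p w\,dx\le c\left(\Norm{f}_{L^{p,\theta}(\Omega)}+\Norm{\psi}_{W^{2,p,\theta}(\Omega)}\right)^p,
\end{equation*}
and taking the supremum over $y,\rho$ gives $\Norm{D^2u}_{L^{p,\theta}(\Omega)}\le c\left(\Norm{f}_{L^{p,\theta}(\Omega)}+\Norm{\psi}_{W^{2,p,\theta}(\Omega)}\right)$; combining with the $L^p$ bound on $u$ itself yields (\ref{5-1}). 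Uniqueness is inherited from Theorem \ref{fully-obs-thm} since $W^{2,p,\theta}(\Omega)\subset W^{2,p}(\Omega)$.

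The main obstacle, and the point deserving the most care, is the uniformity of all constants in the family of weights $\{w_{y,\rho}\}$: one needs that $[w_{y,\rho}]_{p/n_0}$ is bounded by a constant independent of $y$ and $\rho$ (this is exactly the content of the fact that powers of maximal functions of balls lie in $A_1$ with uniformly bounded $A_1$ constant, \cite{CR}), and hence that the threshold $\delta$ and the constant $c$ coming from Theorem \ref{fully-obs-thm} can be chosen once and for all. A secondary technical point is the telescoping estimate $\Norm{g}_{L^p_{w_{y,\rho}}(\Omega)}\le c\,\rho^{\theta/p}\Norm{g}_{L^{p,\theta}(\Omega)}$, which requires the exponent identification $n\alpha=n-\theta$ together with $0<\theta<n$ so that $\alpha\in(0,1)$ and the geometric series in $j$ converges; the restriction to $0<\theta<n$ (already imposed in the statement) is precisely what makes this work. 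Everything else is a routine re-reading of the proof of Theorem \ref{fully-obs-thm} with $L^p$ replaced by $L^p_{w_{y,\rho}}$ throughout.
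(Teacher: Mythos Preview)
Your approach is essentially the same as the paper's: use the weights $w_{y,\rho}=(\mathcal{M}\chi_{B_\rho(y)})^\alpha$, invoke Theorem~\ref{fully-obs-thm} with these weights (exploiting the uniform bound on $[w_{y,\rho}]_{p/n_0}$ from \cite{CR}), and pass to Morrey norms via a dyadic decomposition of $\Omega$ into annuli centered at $y$. The paper carries this out with $\alpha=\sigma$ chosen freely in $(\theta/n,1)$.

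There is one slip, however. Your identification $n\alpha=n-\theta$ gives $\alpha=1-\theta/n$, and then the exponent in the telescoping geometric series is $\theta-n\alpha=2\theta-n$, which is nonnegative precisely when $\theta\ge n/2$; the series then diverges and the argument collapses for half of the stated range of~$\theta$. The correct requirement is simply $n\alpha>\theta$, i.e.\ any $\alpha\in(\theta/n,1)$ works for the full range $0<\theta<n$; this is exactly the paper's choice. Once that is fixed your argument goes through. (Two small side remarks: first, $\mathcal{M}\chi_B\le 1$ everywhere since $\chi_B\le 1$, so $w_{y,\rho}\le 1$ \emph{does} hold, contrary to what you write; second, the proof of Theorem~\ref{fully-obs-thm} actually delivers the sharper bound $\Norm{u}_{W^{2,p}_w}\le c(\Norm{f}_{L^p_w}+\Norm{D^2\psi}_{L^p_w})$, since only $g=f-F(\cdot,D^2\psi)$ enters, and this is what the paper uses---it sidesteps your concern about lower-order terms of $\psi$ entirely.)
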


\begin{theorem}
\label{lin-obs-morrey}
Let $2<p<\infty$ and let $0 < \theta < n$.
Suppose that $\partial \Omega \in C^{1,1}$, $f \in L^{p,\theta}(\Omega)$ and $\psi \in W^{2,p,\theta}(\Omega)$.
There exists a small $\delta=\delta(n,\lambda,\Lambda,p,\theta,\partial \Omega)>0$ such that if $\mathbf{A}$ is uniformly elliptic and $(\delta,R)$-vanishing, then the obstacle problem (\ref{obs prob lin}) has a unique solution $u \in W^{2,p,\theta}(\Omega)$ and we have the estimate
\begin{equation}
\label{5-2}
\Norm{u}_{W^{2,p,\theta}(\Omega)} \leq c \left( \Norm{f}_{L^{p,\theta}(\Omega)} + \Norm{\psi}_{W^{2,p,\theta}(\Omega)} \right),
\end{equation}
for some positive constant $c=c(n,\lambda,\Lambda,p,\theta,\partial \Omega,\mathrm{diam}(\Omega))$.
\end{theorem}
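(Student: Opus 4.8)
The plan is to transfer the weighted $W^{2,p}$ estimate of Theorem \ref{lin-obs-thm} into a Morrey estimate by exploiting the connection between Morrey spaces and Muckenhoupt weights alluded to in the introduction; the key fact is that the Hardy–Littlewood maximal function $M\chi_{B_\rho}$ of a characteristic function of a ball belongs to $A_1 \subset A_{p/2}$ with $A_1$-constant bounded independently of $\rho$ (see \cite{CR}), and that for $x$ in a fixed ball one has $M\chi_{B_\rho(y)}(x) \geq c\,\rho^{\theta-n}$ on $B_\rho(y)$ for an appropriate choice relating the exponent to $\theta$. More precisely, for a fixed reference ball $B_\rho(y)$ I would set $w(x) := \left(M\chi_{B_{2\rho}(y)}(x)\right)^{\alpha}$ with $\alpha = \alpha(n,\theta) \in (0,1)$ chosen so that $w \in A_{p/2}$ with $[w]_{p/2}$ independent of $\rho$ and $y$, and so that $w(x) \gtrsim \rho^{-(n-\theta)}$ on $B_\rho(y)$ while $\int_\Omega |g|^p w\,dx \lesssim \Norm{g}_{L^{p,\theta}(\Omega)}^p$ uniformly in $y,\rho$ for every $g \in L^{p,\theta}(\Omega)$.

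The steps, in order, are as follows. \textbf{Step 1.} First I would record the weight lemma: there is $\alpha = \alpha(n,\theta)$ such that the family $\{ (M\chi_{B_{2\rho}(y)})^\alpha : y \in \mr^n,\ \rho > 0 \}$ lies in $A_{p/2}$ with a uniform $A_{p/2}$ bound, and such that for every $h \in L^{p,\theta}(\Omega)$, $\sup_{y,\rho} \rho^{-(n-\theta)} \int_{\Omega} |h|^p (M\chi_{B_{2\rho}(y)})^{-\alpha}\cdots$ — more cleanly: $\int_\Omega |h|^p (M\chi_{B_{2\rho}(y)})^\alpha\,dx \le c\,\Norm{h}_{L^{p,\theta}(\Omega)}^p$ and $(M\chi_{B_{2\rho}(y)})^\alpha \ge c\rho^{\theta - n}$ on $B_\rho(y)\cap\Omega$. \textbf{Step 2.} Given $f \in L^{p,\theta}(\Omega)$ and $\psi \in W^{2,p,\theta}(\Omega)$, for each fixed $y,\rho$ apply Theorem \ref{lin-obs-thm} with weight $w = w_{y,\rho}$ from Step 1 (and a $\delta$ small enough to serve all these weights simultaneously, which is legitimate since their $A_{p/2}$-constants are uniformly bounded and the $\delta$ in Theorem \ref{lin-obs-thm} depends on the weight only through that constant). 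This produces the unique solution $u$ — uniqueness is already guaranteed by Theorem \ref{lin-obs-thm} and does not depend on the chosen weight — together with the estimate $\int_\Omega |D^2 u|^p w_{y,\rho}\,dx \le c\left(\int_\Omega |f|^p w_{y,\rho}\,dx + \int_\Omega |D^2\psi|^p w_{y,\rho}\,dx + \Norm{\psi}_{L^p_{w}}^p\right)$ with $c$ uniform in $y,\rho$. \textbf{Step 3.} Restrict the left side to $B_\rho(y)\cap\Omega$, use the lower bound $w_{y,\rho} \ge c\rho^{\theta-n}$ there, and bound the right side using the upper bound in Step 1; this gives $\rho^{-\theta}\int_{B_\rho(y)\cap\Omega} |D^2 u|^p\,dx \le c\left(\Norm{f}_{L^{p,\theta}(\Omega)}^p + \Norm{\psi}_{W^{2,p,\theta}(\Omega)}^p\right)$. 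Taking the supremum over $y,\rho$ yields $D^2 u \in L^{p,\theta}(\Omega)$ with the desired bound; combining with the $L^p$ bound on $u$ (from $L^{p,\theta}\subset L^p$ on bounded domains and Theorem \ref{lin-obs-thm} applied once with $w\equiv 1$) gives \eqref{5-2}.

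\textbf{Main obstacle.} The analytic heart is Step 1 — pinning down the exponent $\alpha(n,\theta)$ and verifying the two quantitative estimates for the weight $(M\chi_{B_{2\rho}})^\alpha$ with constants independent of the ball. The upper bound $\int_\Omega |h|^p (M\chi_{B_{2\rho}(y)})^\alpha \le c\Norm{h}_{L^{p,\theta}}^p$ requires a dyadic decomposition of $\mr^n$ into annuli $2^j\rho \le |x-y| < 2^{j+1}\rho$, on which $M\chi_{B_{2\rho}(y)}(x) \sim 2^{-jn}$, combined with the Morrey bound $\int_{B_{2^{j+1}\rho}(y)}|h|^p \le c(2^{j+1}\rho)^\theta \Norm{h}_{L^{p,\theta}}^p$ and summability of $\sum_j 2^{j\theta} 2^{-jn\alpha}$, which forces $\alpha > \theta/n$; the $A_{p/2}$ membership with uniform constant instead forces $\alpha < $ some threshold depending on $p$ and $n$, so one must check the two ranges overlap (this uses $0 < \theta < n$ and $p > 2$, i.e. $p/2 > 1$, so that the admissible interval for $\alpha$ is nonempty — here is where the hypotheses are consumed). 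Once Step 1 is in hand, Steps 2–3 are a direct, essentially bookkeeping, application of Theorem \ref{lin-obs-thm}. The proof of Theorem \ref{fully-obs-morrey} is identical, replacing Theorem \ref{lin-obs-thm} by Theorem \ref{fully-obs-thm} and $A_{p/2}$ by $A_{p/n_0}$, so I would prove both at once or state that the second follows \emph{mutatis mutandis}.
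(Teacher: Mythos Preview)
Your approach is the paper's: choose the Coifman--Rochberg weight $w=(\mathcal{M}\chi_{B_r(x_0)})^\sigma$, apply the weighted obstacle estimate (Theorem~\ref{lin-obs-thm}) with that weight, and control the weighted right-hand side via the dyadic annular decomposition you describe in the ``Main obstacle'' paragraph. The paper proves Theorems~\ref{fully-obs-morrey} and \ref{lin-obs-morrey} simultaneously in exactly this way, just as you propose.

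The quantitative bounds in your Step~1 are misnormalized, however, and as literally written Step~3 does not produce the inequality you claim. The correct pair is: (i) $(\mathcal{M}\chi_{B_r})^\sigma\geq\chi_{B_r}$, i.e.\ the weight equals $1$ on the ball --- your lower bound $\geq c\rho^{\theta-n}$ is false for small $\rho$, since the weight is $\leq 1$ everywhere; and (ii) the dyadic computation you outline actually yields $\int_\Omega|h|^p(\mathcal{M}\chi_{B_r})^\sigma\,dx\leq c\,r^\theta\Norm{h}_{L^{p,\theta}}^p$, \emph{with} the factor $r^\theta$. These two corrected bounds combine to give $r^{-\theta}\int_{B_r\cap\Omega}|D^2u|^p\,dx\leq c(\cdots)$ directly. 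Also, the upper threshold on the exponent is simply $\sigma<1$: Coifman--Rochberg places $(\mathcal{M}\chi_B)^\sigma$ in $A_1\subset A_{p/2}$ for every $\sigma\in(0,1)$ with $A_1$-constant depending only on $n$ and $\sigma$, so the admissible range is $(\theta/n,1)$, nonempty precisely because $\theta<n$. Finally, the lower-order term $\Norm{\psi}_{L^p_w}^p$ you carry in Step~2 does not scale like $r^\theta$ and would blow up after dividing; you need the sharper form $\Norm{D^2u}_{L^p_w}\leq c\bigl(\Norm{f}_{L^p_w}+\Norm{D^2\psi}_{L^p_w}\bigr)$, which the proof of Theorem~\ref{lin-obs-thm} in fact delivers (the obstacle enters only through $g=f-a_{ij}D_{ij}\psi$) and which the paper uses without comment.
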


\begin{proof}[Proof of Theorem \ref{fully-obs-morrey} and \ref{lin-obs-morrey}]
Throughout the proof, we use the number $m_0$ to denote the number $n_0$ when proving Theorem \ref{fully-obs-morrey}, and the number $2$ when proving Theorem \ref{lin-obs-morrey}, respectively.

We first recall that for a locally integrable function $h : \mr^n \to \mr$, the Hardy-Littlewood maximal function of $h$ is defined by
\begin{equation*}
\mathcal{M}h(x) := \sup_{r>0} \frac{1}{|B_r(x)|} \int_{B_r(x)} |h(y)| \, dy,
\end{equation*}
for $x \in \mr^n$.
From \cite[Proposition 2]{CR}, we see that if $\sigma \in (0,1)$, then
\begin{equation*}
\left( \mathcal{M}\chi_{B_r(x_0)}(x) \right)^{\sigma} \in A_1,
\end{equation*}
where $\chi_{B_r(x_0)}$ is the characteristic function of $B_r(x_0)$.
Hence, it follows from the fact that $p>m_0$ and the monotonicity of the classes $A_s$ that
\begin{equation*}
\left( \mathcal{M}\chi_{B_r(x_0)}(x) \right)^{\sigma} \in A_1 \subset A_{\frac{p}{m_0}},
\end{equation*}
with $\left[ \left( \mathcal{M}\chi_{B_r(x_0)}(x) \right)^{\sigma} \right]_{\frac{p}{m_0}} \leq c(n,m_0,p,\sigma)$.

We now fix any $\sigma \in \left( \frac{\theta}{n},1 \right)$.
Then by Theorem \ref{fully-obs-thm} and \ref{lin-obs-thm}, we have
\begin{align*}
& \int_{B_r(x_0) \cap \Omega} |D^2 u|^p \ dx = \int_{\Omega} |D^2 u|^p \left( \chi_{B_r(x_0)} \right)^{\sigma} dx \\
& \quad \leq \int_{\Omega} |D^2 u|^p \left( \mathcal{M} \chi_{B_r(x_0)} \right)^{\sigma} dx \\
& \quad \leq c \left( \int_{\Omega} |f|^p \left( \mathcal{M}
\chi_{B_r(x_0)} \right)^{\sigma} dx + \int_{\Omega} |D^2 \psi|^p
\left( \mathcal{M} \chi_{B_r(x_0)} \right)^{\sigma} dx \right),
\end{align*}
for some positive constant $c$ depending only on $n,\lambda,\Lambda,p,\theta,\partial \Omega$ and $\mathrm{diam}(\Omega)$.

We next use the following set decomposition
\begin{equation*}
\Omega = \left( B_{2r}(x_0) \cap \Omega \right) \cup \left( \bigcup_{k=1}^{\infty}
\left( B_{2^{k+1}r}(x_0) \setminus B_{2^{k}r}(x_0) \right) \cap \Omega \right),
\end{equation*}
to find
\begin{multline}
\label{5-4}
\int_{\Omega} |f|^p \left( \mathcal{M} \chi_{B_r(x_0)} \right)^{\sigma} dx = \int_{B_{2r}(x_0) \cap \Omega} |f|^p \left( \mathcal{M} \chi_{B_r(x_0)} \right)^{\sigma} dx \\
+ \sum_{k=1}^{\infty} \int_{(B_{2^{k+1}r}(x_0) \setminus B_{2^{k}r}(x_0)) \cap \Omega} |f|^p \left( \mathcal{M} \chi_{B_r(x_0)} \right)^{\sigma} dx.
\end{multline}
Since $\mathcal{M} \chi_{B_r(x_0)} \leq 1$, we have the
estimate
\begin{equation}
\label{5-5}
\int_{B_{2r}(x_0) \cap \Omega} |f|^p \left( \mathcal{M} \chi_{B_r(x_0)} \right)^{\sigma} dx \leq \int_{B_{2r}(x_0) \cap \Omega} |f|^p \, dx \leq r^{\theta} \Norm{f}_{L^{p,\theta}(\Omega)}^p.
\end{equation}
We note that for each $x \in B_{2^{k+1}r}(x_0) \setminus
B_{2^{k}r}(x_0)$ and for each $\rho > (2^{k+1}-1)r$,
\begin{equation*}
0 < \mint_{B_{\rho}(x)} \chi_{B_r(x_0)}(y) \, dy \leq \frac{|B_r(x_0)|}{|B_{\rho}(x)|} = \left( \frac{r}{\rho} \right)^n.
\end{equation*}
Since $2^{k+1}-1 \geq 2^{k}-1 \geq 2^{k-1}$, it follows that
\begin{equation*}
\mint_{B_{\rho}(x)} \chi_{B_r(x_0)}(y) \, dy \leq \left( \frac{r}{2^{k-1}r} \right)^n = \frac{1}{2^{n(k-1)}},
\end{equation*}
and hence
\begin{equation*}
\left( \mathcal{M} \chi_{B_r(x_0)}(x) \right)^{\sigma} = \left( \sup_{\rho>0} \mint_{B_{\rho}(x)} \chi_{B_r(x_0)}(y) \, dy \right)^{\sigma} \leq \frac{1}{2^{\sigma n(k-1)}}.
\end{equation*}
Therefore, we deduce that for each $k=1,2,\cdots$,
\begin{align}
\nonumber & \int_{(B_{2^{k+1}r}(x_0) \setminus B_{2^{k}r}(x_0)) \cap \Omega} |f|^p \left( \mathcal{M} \chi_{B_r(x_0)} \right)^{\sigma} dx \\
\nonumber & \qquad \leq \frac{1}{2^{\sigma n(k-1)}} \int_{(B_{2^{k+1}r}(x_0) \setminus B_{2^{k}r}(x_0)) \cap \Omega} |f|^p \, dx \\
\nonumber & \qquad \leq \frac{1}{2^{\sigma n(k-1)}} \int_{B_{2^{k+1}r}(x_0) \cap \Omega} |f|^p \, dx \\
\label{5-6} & \qquad \leq c(n) \frac{(2^{k+1}r)^{\theta}}{2^{\sigma n(k-1)}} \Norm{f}_{L^{p,\theta}(\Omega)}^p = c(n) 2^{(\sigma n + \theta) - (\sigma n - \theta)k} r^{\theta} \Norm{f}_{L^{p,\theta}(\Omega)}^p.
\end{align}
We combine (\ref{5-5}) and (\ref{5-6}) with (\ref{5-4}) to derive
\begin{align*}
\int_{\Omega} |f|^p \left( \mathcal{M} \chi_{B_r(x_0)} \right)^{\sigma} dx & \leq c(n) r^{\theta} \left( 1 + 2^{\sigma n + \theta} \sum_{k=1}^{\infty} 2^{-(\sigma n - \theta)k} \right) \Norm{f}_{L^{p,\theta}(\Omega)}^p \\
& \leq c r^{\theta} \left( \sum_{k=0}^{\infty} 2^{-(\sigma n - \theta)k} \right) \Norm{f}_{L^{p,\theta}(\Omega)}^p \\
& \leq c r^{\theta} \Norm{f}_{L^{p,\theta}(\Omega)}^p.
\end{align*}
Similarly, we find
\begin{equation*} \int_{\Omega} |D^2 \psi|^p
\left( \mathcal{M} \chi_{B_r(x_0)} \right)^{\sigma} dx \leq c
r^{\theta} \Norm{D^2 \psi}_{L^{p,\theta}(\Omega)}^p.
\end{equation*}
Thus
\begin{equation*}
\int_{B_r(x_0) \cap \Omega} |D^2 u|^p \, dx \leq c r^{\theta} \left( \Norm{f}_{L^{p,\theta}(\Omega)}^p + \Norm{D^2 \psi}_{L^{p,\theta}(\Omega)}^p \right).
\end{equation*}
Dividing the both sides by $r^{\theta}$ and taking the supremum with respect to $x_0 \in \Omega$ and $r>0$, we conclude that $D^2 u \in L^{p,\theta}(\Omega)$ with the desired estimates (\ref{5-1}) and (\ref{5-2}).
This completes the proof.
\end{proof}

For the fully nonlinear obstacle problem (\ref{obs prob fully}), we have H\"{o}lder continuity of the gradient of the solution when $p>n$, by the Sobolev embedding theorem.
However, for the linear obstacle problem (\ref{obs prob lin}), we cannot obtain such a result directly, when $2 < p \leq n$.
Nevertheless, the Morrey regularity result (see Theorem \ref{lin-obs-morrey}) and the following Sobolev-Morrey embedding lemma allow to prove H\"{o}lder continuity of the gradient of the solution for appropriate values of $p$ and $\theta$.

\begin{lemma}
\label{morrey embedding}
\cite[Lemma 3.III and Lemma 3.IV]{Cam}
Suppose that $\Omega$ is a bounded domain with $\partial \Omega \in C^{1,1}$.
Let $v \in W^{1,p,\theta}(\Omega)$.
If $p + \theta > n$, then $v \in C^{0,\alpha}(\overline{\Omega})$ for $\alpha = 1 - \frac{n-\theta}{p}$ and we have the estimate
\begin{equation*}
\Norm{v}_{C^{0,\alpha}(\overline{\Omega})} \leq c \Norm{v}_{W^{1,p,\theta}(\Omega)},
\end{equation*}
where $c$ is a positive constant depending only on $n,p,\theta$ and $\partial \Omega$.
\end{lemma}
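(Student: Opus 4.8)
The plan is to prove the embedding by a classical Riesz-potential estimate combined with the measure-density property of $C^{1,1}$ domains, which lets one avoid any extension operator. The only geometric input we use is that, since $\partial\Omega \in C^{1,1}$ (indeed Lipschitz would suffice), there is a constant $c_0 > 0$ with $|B_r(x) \cap \Omega| \geq c_0 r^n$ for all $x \in \overline{\Omega}$ and $0 < r \leq \mathrm{diam}(\Omega)$, and that near each boundary point $\Omega$ is, after flattening, star-shaped with respect to a set of positive measure, so that the standard integral representation of $v$ minus its average applies with uniform constants.

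First I would establish the basic oscillation estimate. For $x \in \overline{\Omega}$ and $0 < \rho \leq \mathrm{diam}(\Omega)$, write $\Omega_\rho(x) := B_\rho(x) \cap \Omega$ and let $\overline{v}_{\Omega_\rho(x)}$ denote the integral average of $v$ over $\Omega_\rho(x)$. Starting from the representation $|v(x) - \overline{v}_{\Omega_\rho(x)}| \leq c \int_{\Omega_\rho(x)} |Dv(z)| \, |x-z|^{1-n} \, dz$, I would decompose $\Omega_\rho(x)$ into the dyadic annuli $(B_{2^{-k}\rho}(x) \setminus B_{2^{-k-1}\rho}(x)) \cap \Omega$, $k \geq 0$. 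On the $k$-th annulus $|x-z|^{1-n} \sim (2^{-k}\rho)^{1-n}$, and Hölder's inequality together with the Morrey bound $\int_{\Omega_s(x)} |Dv|^p \leq s^\theta \Norm{Dv}_{L^{p,\theta}(\Omega)}^p$ and the measure-density estimate $|\Omega_s(x)| \leq c s^n$ give a contribution bounded by $c\,(2^{-k}\rho)^{1-\frac{n-\theta}{p}} \Norm{Dv}_{L^{p,\theta}(\Omega)}$. Since $p + \theta > n$ is exactly the condition $1 - \frac{n-\theta}{p} > 0$, the geometric series in $k$ converges and we obtain
\begin{equation*}
|v(x) - \overline{v}_{\Omega_\rho(x)}| \leq c\, \rho^{\alpha} \Norm{Dv}_{L^{p,\theta}(\Omega)}, \qquad \alpha = 1 - \frac{n-\theta}{p}.
\end{equation*}

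Next I would upgrade this to a Hölder seminorm bound and an $L^\infty$ bound. Given $x,y \in \overline{\Omega}$ with $r := |x-y|$, set $\rho = 2r$ and compare $v(x)$ and $v(y)$ to $\overline{v}_{\Omega_\rho(x)}$ and $\overline{v}_{\Omega_\rho(y)}$ respectively; the difference $|\overline{v}_{\Omega_\rho(x)} - \overline{v}_{\Omega_\rho(y)}|$ is controlled by the same oscillation estimate applied on a ball containing both sets (measure density keeps the constant uniform), which yields $|v(x)-v(y)| \leq c\, r^\alpha \Norm{Dv}_{L^{p,\theta}(\Omega)}$. For the sup norm, take $\rho = \mathrm{diam}(\Omega)$ in the oscillation estimate and bound $|\overline{v}_{\Omega_\rho(x)}| \leq |\Omega_\rho(x)|^{-1/p}\Norm{v}_{L^p(\Omega)} \leq c\Norm{v}_{L^p(\Omega)}$ by Hölder's inequality; combining gives $\Norm{v}_{L^\infty(\Omega)} \leq c(\Norm{v}_{L^p(\Omega)} + \Norm{Dv}_{L^{p,\theta}(\Omega)})$, and adding the seminorm bound produces $\Norm{v}_{C^{0,\alpha}(\overline{\Omega})} \leq c\Norm{v}_{W^{1,p,\theta}(\Omega)}$.

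The main obstacle is making the integral representation and the uniform-constant bookkeeping legitimate on $\Omega$ rather than on a ball: one must invoke the $C^{1,1}$ structure of $\partial\Omega$ to flatten boundary charts and patch the local estimates, and verify that the Riesz-potential representation and the chaining of averages go through with constants depending only on $c_0$ and the chart bounds. This is precisely the technical content carried out in \cite{Cam}, which is why we simply quote the result. Alternatively, one can construct a bounded extension operator $W^{1,p,\theta}(\Omega) \to W^{1,p,\theta}(\mr^n)$ — reflection-type extensions across a $C^{1,1}$ boundary preserve the Morrey scaling — and then apply the clean $\mr^n$ version of the argument above, reducing the boundary difficulty to a standard extension theorem.
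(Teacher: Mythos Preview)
The paper does not supply its own proof of this lemma: it is stated with an explicit citation to \cite[Lemma 3.III and Lemma 3.IV]{Cam} and used as a black box in the one-line proof of the subsequent theorem. So there is nothing in the paper to compare your argument against beyond the reference itself.

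That said, your sketch is a correct outline of the classical Campanato-type argument. The chain (Riesz-potential representation of $v(x)-\overline v_{\Omega_\rho(x)}$, dyadic annular decomposition, H\"older plus the Morrey condition on $Dv$, summation of the geometric series under $p+\theta>n$, then chaining averages to get the $C^{0,\alpha}$ seminorm and combining with a crude $L^p$ bound on the average for the sup norm) is exactly the standard route and is essentially what is done in \cite{Cam}. You also correctly flag the only genuinely delicate point, namely that the pointwise potential representation with a uniform constant is not automatic on a general domain and requires either a local flattening/star-shapedness argument using the $C^{1,1}$ boundary or a Morrey-bounded extension; either device is legitimate and yields the stated dependence of the constant on $n,p,\theta,\partial\Omega$.
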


We now state the H\"{o}lder continuity result of the gradient of the solution to the linear obstacle problem (\ref{obs prob lin}).

\begin{theorem}
Under the assumptions of Theorem \ref{lin-obs-morrey}, let $u \in W^{2,p,\theta}(\Omega)$ be the solution to the obstacle problem (\ref{obs prob lin}).
If $p + \theta > n$, then $Du \in C^{0,\alpha}(\overline{\Omega})$ for $\alpha = 1-\frac{n-\theta}{p}$ and we have the estimate
\begin{equation}
\Norm{Du}_{C^{0,\alpha}(\overline{\Omega})} \leq c \left( \Norm{f}_{L^{p,\theta}(\Omega)} + \Norm{\psi}_{W^{2,p,\theta}(\Omega)} \right),
\end{equation}
for some positive constant $c=c(n,\lambda,\Lambda,p,\theta,\partial \Omega,\mathrm{diam}(\Omega))$.
\end{theorem}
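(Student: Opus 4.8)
The plan is to combine the Morrey regularity of the Hessian established in Theorem~\ref{lin-obs-morrey} with the Sobolev--Morrey embedding of Lemma~\ref{morrey embedding}. First I would invoke Theorem~\ref{lin-obs-morrey}: under the stated hypotheses the obstacle problem (\ref{obs prob lin}) has a unique solution $u \in W^{2,p,\theta}(\Omega)$ with
\begin{equation*}
\Norm{u}_{W^{2,p,\theta}(\Omega)} \leq c \left( \Norm{f}_{L^{p,\theta}(\Omega)} + \Norm{\psi}_{W^{2,p,\theta}(\Omega)} \right).
\end{equation*}
In particular, each first-order derivative $D_i u$ has all its first derivatives $D_{ij}u$ in the Morrey space $L^{p,\theta}(\Omega)$, and $D_i u \in L^p(\Omega)$ by the standard $W^{2,p}$-theory (or simply because $u \in W^{2,p,\theta}(\Omega) \subset W^{2,p}(\Omega)$). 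Hence $D_i u \in W^{1,p,\theta}(\Omega)$ for each $i = 1,\dots,n$, with
\begin{equation*}
\Norm{D_i u}_{W^{1,p,\theta}(\Omega)} \leq c \Norm{u}_{W^{2,p,\theta}(\Omega)}.
\end{equation*}

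Next, since the hypothesis $p + \theta > n$ is exactly the condition required by Lemma~\ref{morrey embedding}, I would apply that lemma to each component $v = D_i u \in W^{1,p,\theta}(\Omega)$. This gives $D_i u \in C^{0,\alpha}(\overline{\Omega})$ with $\alpha = 1 - \frac{n-\theta}{p}$ and
\begin{equation*}
\Norm{D_i u}_{C^{0,\alpha}(\overline{\Omega})} \leq c \Norm{D_i u}_{W^{1,p,\theta}(\Omega)}.
\end{equation*}
Summing over $i = 1,\dots,n$, we conclude $Du \in C^{0,\alpha}(\overline{\Omega})$, and chaining the three estimates yields
\begin{equation*}
\Norm{Du}_{C^{0,\alpha}(\overline{\Omega})} \leq c \Norm{u}_{W^{2,p,\theta}(\Omega)} \leq c \left( \Norm{f}_{L^{p,\theta}(\Omega)} + \Norm{\psi}_{W^{2,p,\theta}(\Omega)} \right),
\end{equation*}
with $c$ depending only on $n,\lambda,\Lambda,p,\theta,\partial\Omega$ and $\mathrm{diam}(\Omega)$, as claimed.

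There is essentially no hard step here: the entire argument is a two-line combination of a result proved earlier in the paper with a cited embedding lemma. The only point requiring a word of care is the bookkeeping that $D^2 u \in L^{p,\theta}(\Omega)$ (the Morrey bound on the Hessian from Theorem~\ref{lin-obs-morrey}) together with $Du \in L^p(\Omega)$ indeed places each $D_i u$ in the space $W^{1,p,\theta}(\Omega)$ to which Lemma~\ref{morrey embedding} applies; once that identification is made, the Hölder exponent $\alpha = 1 - \frac{n-\theta}{p} \in (0,1)$ is dictated by the lemma and the constant tracks through unchanged. So the expected "main obstacle" is merely verifying the space membership and the $p+\theta>n$ threshold, both of which are immediate from the statement of Theorem~\ref{lin-obs-morrey} and the definition of $W^{1,p,\theta}$.
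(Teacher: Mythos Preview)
Your proposal is correct and follows exactly the approach of the paper, which simply states that the result follows directly from Theorem~\ref{lin-obs-morrey} and Lemma~\ref{morrey embedding}. You have merely spelled out the bookkeeping (that $D_i u \in W^{1,p,\theta}(\Omega)$ and that the embedding applies componentwise) that the paper leaves implicit.
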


\begin{proof}
The proof follows directly from Theorem \ref{lin-obs-morrey} and Lemma \ref{morrey embedding}.
\end{proof}

\section{\bf Parabolic obstacle problem}
\label{sec6}

In this section we consider the parabolic obstacle problem (\ref{parabolic obs prob}).
As in the elliptic case, we first introduce the small BMO assumption on the coefficient matrix $\mathbf{A}(x,t)$.

\begin{definition}
We say that the coefficient matrix $\mathbf{A}=\mathbf{A}(x,t)$ is \textit{$(\delta,R)$-vanishing} if
\begin{equation}
\sup_{0 < r \leq R} \ \sup_{(y,s) \in \mr^n \times \mr} \left(
\mint_{Q_r(y,s)}
|\mathbf{A}(x,t)-\overline{\mathbf{A}}_{Q_r(y,s)}|^2 \, dx dt
\right)^{\frac{1}{2}} \leq \delta,
\end{equation}
where $\overline{\mathbf{A}}_{Q_r(y,s)} = \mint_{Q_r(y,s)} \mathbf{A}(x,t) \, dx dt$ is the integral average of $\mathbf{A}(x,t)$ on the parabolic cylinder $Q_r(y,s)$.
\end{definition}

We remark that one can take $R=1$ as in the elliptic case, which is due to the scaling invariance property.
On the other hand, $\delta>0$ is invariant under such a scaling.

We now provide the definitions of the Muckenhoupt classes and weighted Sobolev space in the parabolic version.
For a given $1<s<\infty$, we say that $w$ is a \textit{weight} in \textit{Muckenhoupt class $A_s$}, or an \textit{$A_s$ weight}, if $w$ is a locally integrable nonnegative function on $\mr^{n+1}$ with
\begin{equation}
[w]_s := \sup_{Q} \left( \mint_{Q} w(x,t) \, dx dt \right) \left( \mint_{Q} w(x,t)^{-\frac{1}{s-1}} \ dx dt \right)^{s-1} < +\infty,
\end{equation}
where the supremum is taken over all parabolic cylinders $Q \subset \mr^{n+1}$.
If $w$ is an $A_s$ weight, we write $w \in A_s$, and $[w]_s$ is called the \textit{$A_s$ constant of $w$}.

The \emph{weighted Lebesgue space} $L^p_w(\Omega_T)$, $1<p<\infty$, $w \in A_s$ with $1<s<\infty$, consists of all measurable functions $g=g(x,t)$ on $\Omega_T$ such that
\begin{equation*}
\Norm{g}_{L^p_w(\Omega_T)}:=\left( \int_{\Omega_T} |g(x,t)|^p w(x,t) \ dx dt \right)^{\frac{1}{p}} < +\infty.
\end{equation*}

The \emph{weighted Sobolev space} $W^{2,1}L^p_w(\Omega_T)$, $1<p<\infty$, $w\in A_s$ with $1<s<\infty$, is defined by a class of functions $g=g(x,t) \in L^p_w(\Omega_T)$ with distributional derivatives $D_t^r D_x^\alpha g(x,t) \in L^p_w(\Omega_T)$ for $0
\leq 2r+|\alpha| \leq 2$.
The norm of $g$ in $W^{2,1}L^p_w(\Omega_T)$ is defined by
\begin{equation*}
\Norm{g}_{W^{2,1}L^p_w(\Omega_T)}:=\left( \sum_{j=0}^{2} \sum_{2r+|\alpha|=j} \int_{\Omega_T} |D_t^r D_x^\alpha g(x,t)|^p w(x,t) \ dx dt \right)^{\frac{1}{p}}.
\end{equation*}
In addition, let $W_0^{2,1}L^p_w(\Omega_T)$ be the closure in the $W^{2,1}L^p_w(\Omega_T)$ norm of the space
\begin{equation*}
\mathcal{C} = \left\lbrace \phi \in C^{\infty}(\Omega_T) : \phi(x,t)=0 \ \ \mathrm{for} \ \ (x,t) \in \partial_p \Omega \right\rbrace.
\end{equation*}

We will utilize the following weighted $W^{2,1}L^p$ estimate for linear parabolic equations without obstacle, see \cite{BL1}.

\begin{lemma}
\label{parabolic eq}
Let $2<p<\infty$ and let $w=w(x,t) \in A_{\frac{p}{2}}$.
Suppose that $\partial \Omega \in C^{1,1}$ and $f \in L^p_w(\Omega_T)$.
Then there exists a small $\delta=\delta(\Lambda,p,n,w,\partial \Omega,T)>0$ such that if $\mathbf{A}$ is uniformly parabolic and $(\delta,R)$-vanishing, then the following problem
\begin{equation*}
\left\{ \begin{array}{rclcc}
u_t - a_{ij}D_{ij}u & = & f & \mathrm{in} & \Omega_T,\\
u & = & 0 & \mathrm{on} & \partial_p \Omega_T
\end{array}\right.
\end{equation*}
has a unique solution $u \in W^{2,1}L^p_w(\Omega_T)$ with the estimate
\begin{equation}
\Norm{u}_{W^{2,1}L^p_w(\Omega_T)} \le c \Norm{f}_{L^p_w(\Omega_T)}
\end{equation}
for some positive constant $c=c(n,\lambda,\Lambda,p,w,\partial \Omega,\mathrm{diam}(\Omega),T)$.
\end{lemma}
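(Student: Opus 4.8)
The statement is a weighted Calder\'{o}n--Zygmund estimate for a nondivergence parabolic operator with small-BMO coefficients. The plan is to derive it from the corresponding \emph{unweighted} estimate via a maximal-function level-set argument, and then to produce solutions by the method of continuity. I would first recall the classical parabolic $W^{2,1}L^q$-theory: if $\mathbf A$ is uniformly parabolic and $(\delta,R)$-vanishing with $\delta$ small, then the Dirichlet problem has a unique solution with $\Norm{u}_{W^{2,1}L^q(\Omega_T)} \le c\Norm{f}_{L^q(\Omega_T)}$ for every $1<q<\infty$. The local mechanism behind this is a two-step comparison: in a cylinder $Q_{2r}$, freeze the coefficients to $\overline{\mathbf A}_{Q_{2r}}$ and then kill the right-hand side, comparing $u$ with the solution $h$ of $h_t - \overline{\mathbf A}_{Q_{2r}}\!:\!D^2 h = 0$; the $(\delta,R)$-vanishing hypothesis and the $L^q$-estimate give
\[
\mint_{Q_r} |D^2 u - D^2 h|^2 \, dx\, dt \le \varepsilon(\delta)\,\mint_{Q_{2r}} \big( |D^2 u|^2 + |f|^2 \big)\, dx\, dt, \qquad \varepsilon(\delta)\to 0 \text{ as } \delta\to 0,
\]
while constant-coefficient theory yields $\Norm{D^2 h}_{L^\infty(Q_r)}^2 \le c\,\mint_{Q_{2r}} |D^2 h|^2$. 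Near $\partial\Omega$ one flattens the $C^{1,1}$ boundary and runs the same comparison against the homogeneous Dirichlet problem in a half-cylinder.

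Feeding these comparison estimates into a Vitali-type covering of the super-level sets of the parabolic Hardy--Littlewood maximal function $\mathcal M\big(|D^2 u|^2\chi_{\Omega_T}\big)$ produces a measure-density inequality of the form
\[
\big| \{ \mathcal M(|D^2 u|^2) > N\lambda \} \big| \le c\,\eta \Big( \big| \{ \mathcal M(|D^2 u|^2) > \lambda \} \big| + \big| \{ \mathcal M(|f|^2) > \delta^2\lambda \} \big| \Big)
\]
for all $\lambda>0$, with $N$ dimensional and $\eta$ as small as desired provided $\delta=\delta(\eta)$ is small. Now take $w\in A_{p/2}$ (parabolic Muckenhoupt class, with parabolic cylinders), write $\int_{\Omega_T}|D^2 u|^p w$ via its distribution function, substitute the inequality above, and use that $A_{p/2}$ weights are doubling and that the parabolic maximal operator is bounded on $L^{p/2}_w$; choosing $\eta$ small in terms of $[w]_{p/2}$ so the iterated term is absorbed gives
\[
\int_{\Omega_T} |D^2 u|^p w \, dx\, dt \le c \int_{\Omega_T} |f|^p w \, dx\, dt,
\]
and the full $W^{2,1}L^p_w$ norm follows by bounding $u_t = a_{ij}D_{ij}u + f$ through the equation and $u,Du$ through a weighted parabolic Poincar\'{e} inequality using $u=0$ on $\partial_p\Omega_T$. (Alternatively, Rubio de Francia extrapolation applied to the unweighted family of the previous paragraph delivers the weighted bound directly.)

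With the a priori estimate in hand, existence of $u \in W^{2,1}L^p_w(\Omega_T)\cap W_0^{2,1}L^p_w(\Omega_T)$ follows by the method of continuity along $\mathbf A_\tau := \tau\mathbf A + (1-\tau)\mathrm{Id}$, $\tau\in[0,1]$: each $\mathbf A_\tau$ is symmetric, uniformly parabolic with the same ellipticity constants, and $(\delta,R)$-vanishing with the same $\delta$ (since $\mathbf A_\tau-\overline{\mathbf A}_\tau=\tau(\mathbf A-\overline{\mathbf A})$), the estimate holds uniformly in $\tau$, and at $\tau=0$ one has the heat equation, whose solvability in $W^{2,1}L^p_w$ is classical (the relevant second-order operators are parabolic singular integrals). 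Uniqueness is immediate by applying the estimate to the difference of two solutions.

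The delicate point is the interaction of the geometric level-set inequality with the weight: one must quantify how small $\delta$ (hence $\eta$) has to be in terms of $[w]_{p/2}$, and carry the covering argument uniformly up to the boundary. In particular the boundary comparison is the most technical piece, since flattening a $C^{1,1}$ boundary produces transformed coefficients that are only small-BMO (not continuous), so the comparison cannot be improved beyond what the interior argument gives; this is also where the restriction $p>2$ with $w\in A_{p/2}$ (rather than $A_p$) enters, through the use of $|D^2 u|^2$ inside the maximal function.
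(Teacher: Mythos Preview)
The paper does not prove this lemma; it is quoted from \cite{BL1} and used as a black box, much as Lemmas~\ref{fully nonlinear eq} and~\ref{lin eq} are quoted from \cite{BLP} and \cite{BL2} in the elliptic sections. So there is no ``paper's own proof'' to compare against.

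That said, your outline is the standard and correct route to such weighted Calder\'on--Zygmund estimates, and is essentially the strategy carried out in \cite{BL1}: a small-perturbation comparison with the constant-coefficient problem (interior and boundary, after flattening), a Vitali-type covering that yields a good-$\lambda$ measure-density inequality for the parabolic maximal function of $|D^2u|^2$, and then the passage to $L^p_w$ using the $A_{p/2}$ doubling and maximal-function properties (or Rubio de Francia extrapolation). Existence via the method of continuity along $\mathbf A_\tau=\tau\mathbf A+(1-\tau)\mathrm{Id}$ is also the standard device, and your observation that $\mathbf A_\tau$ inherits the same ellipticity constants and $(\delta,R)$-vanishing is exactly what makes it work. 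The one place to be careful in a full write-up is the a~priori estimate step: the good-$\lambda$ argument bounds $\Norm{D^2u}_{L^p_w}$ in terms of $\Norm{f}_{L^p_w}$ \emph{plus} a lower-order term like $\Norm{D^2u}_{L^2(\Omega_T)}$ (coming from the starting scale of the iteration), which must then be absorbed via an interpolation/scaling argument or a standard covering-and-absorption trick; you gloss over this, but it is routine once noticed.
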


For an open set $U \subset \mr^{n+1}$, $C^{2,1}(U)$ ($C^{2,1}(\overline{U})$) is defined by a set of continuous functions in $U$ (in $\overline{U}$) having continuous derivatives $D_x u, D^2_x u, D_t u$ in $U$ (in $\overline{U}$).
We also define the parabolic boundary $\partial_p U$ to be the set of all points $(x,t) \in \partial U$ such that for any $r>0$, the parabolic cylinder $Q_r(x,t)$ contains points not in $U$.
We remark that in the special case $U = \Omega_T = \Omega \times (0,T]$, the parabolic boundary $\partial_p U$ of $U$ coincides with $\partial_p \Omega_T = (\partial \Omega \times [0,T]) \cup (\Omega \times \{t=0\})$.

The following maximum principle for linear parabolic equations can be found in \cite[Lemma 2.1]{Li}.

\begin{lemma}
\label{comparison-parabolic}
Let $U \subset \Omega_T$ be a bounded domain.
Suppose that $\mathbf{A}=(a_{ij})$ is uniformly parabolic with $a_{ij} \in C(\mr^{n+1})$.
If $u \in C^{2,1}(\overline{U})$ satisfies
\begin{equation*}
\left\{\begin{array}{rclcc}\
u_t - a_{ij}D_{ij}u & \geq & 0 & \mathrm{in} & U,\\
u & \geq & 0 & \mathrm{on} & \partial_p U,
\end{array}\right.
\end{equation*}
then $u \geq 0$ in $U$.
\end{lemma}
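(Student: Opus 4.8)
The plan is to prove this weak maximum principle by the classical perturbation argument: add a small strictly increasing-in-time term to turn $u$ into a strict supersolution, rule out a negative interior (or ``top'') minimum by elementary first- and second-derivative tests, and then let the perturbation parameter tend to zero.

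First I would set, for $\varepsilon>0$, $w_{\varepsilon}(x,t):=u(x,t)+\varepsilon t$. Since $U\subset\Omega_T=\Omega\times(0,T]$, we have $t\ge 0$ on $\overline{U}$, so $w_{\varepsilon}\ge 0$ on $\partial_p U$ because $u\ge 0$ there. Moreover $\partial_t w_{\varepsilon}-a_{ij}D_{ij}w_{\varepsilon}=(u_t-a_{ij}D_{ij}u)+\varepsilon\ge\varepsilon>0$ in $U$, so $w_{\varepsilon}$ is a \emph{strict} supersolution. It then suffices to show $w_{\varepsilon}\ge 0$ on $\overline{U}$ for every $\varepsilon>0$: since $w_{\varepsilon}\ge u$ is false but $u=w_{\varepsilon}-\varepsilon t\ge -\varepsilon T$ on $\overline U$, letting $\varepsilon\searrow 0$ yields $u\ge 0$ in $U$.

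The key step is to exclude a negative minimum. Suppose, for contradiction, that $\min_{\overline U}w_{\varepsilon}<0$. By compactness of $\overline U$ and continuity of $w_{\varepsilon}$ (here $u\in C^{2,1}(\overline U)$ is used), this minimum is attained at some $(x_0,t_0)\in\overline U$, and since $w_{\varepsilon}\ge 0$ on $\partial_p U$ we must have $(x_0,t_0)\notin\partial_p U$. By the definition of the parabolic boundary there is then $r>0$ with the backward cylinder $B_r(x_0)\times(t_0-r^2,t_0]$ contained in $\overline U$, along which $u$ is still $C^{2,1}$ up to the slice $\{t=t_0\}$ (this is automatic if $(x_0,t_0)\in U$, and is exactly what $(x_0,t_0)\notin\partial_p U$ gives otherwise). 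Minimality of $w_{\varepsilon}$ in the spatial ball $B_r(x_0)$ forces $D_x w_{\varepsilon}(x_0,t_0)=0$ and $D^2_x w_{\varepsilon}(x_0,t_0)\ge 0$, whence $a_{ij}D_{ij}w_{\varepsilon}(x_0,t_0)=\mathrm{tr}\!\big(\mathbf{A}(x_0,t_0)\,D^2_x w_{\varepsilon}(x_0,t_0)\big)\ge 0$, using that $\mathbf{A}\ge 0$ by uniform parabolicity and that the trace of a product of two nonnegative-definite symmetric matrices is nonnegative. Comparing $w_{\varepsilon}(x_0,t)$ with $w_{\varepsilon}(x_0,t_0)$ for $t\nearrow t_0$ gives $\partial_t w_{\varepsilon}(x_0,t_0)\le 0$. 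Hence $\partial_t w_{\varepsilon}-a_{ij}D_{ij}w_{\varepsilon}\le 0$ at $(x_0,t_0)$, contradicting the strict inequality above; therefore $w_{\varepsilon}\ge 0$ on $\overline U$, and the proof concludes as indicated.

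I expect the only point requiring genuine care to be the case in which the minimum of $w_{\varepsilon}$ sits on $\partial U\setminus\partial_p U$ (a ``top'' point of the region rather than a strictly interior one): the spatial second-derivative test is unaffected, but the time derivative can only be controlled from one side, which is precisely why the perturbation $+\varepsilon t$ (producing a strict differential inequality) is essential and why $C^{2,1}(\overline U)$ regularity — not merely $C^{2,1}(U)$ — is hypothesized. The continuity of the coefficients $a_{ij}$ plays only an auxiliary role, ensuring $a_{ij}D_{ij}u$ is a well-defined continuous function on $\overline U$; the argument itself uses only boundedness and positivity of $\mathbf{A}$.
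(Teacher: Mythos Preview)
Your argument is the classical perturbation proof of the weak parabolic maximum principle and is correct. The paper does not give its own proof of this lemma; it simply cites \cite[Lemma~2.1]{Li} (Lieberman), whose proof is essentially the same strict-supersolution / interior-minimum argument you wrote out. So your approach matches the standard one the paper defers to.

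Two minor remarks. First, the clause ``since $w_{\varepsilon}\ge u$ is false'' is a slip --- in fact $w_{\varepsilon}=u+\varepsilon t\ge u$ on $\overline{U}$; what you use (and correctly state next) is $u=w_{\varepsilon}-\varepsilon t\ge -\varepsilon T$. Second, when the minimum lies on $\partial U\setminus\partial_p U$, the strict differential inequality $\partial_t w_{\varepsilon}-a_{ij}D_{ij}w_{\varepsilon}\ge\varepsilon$ is needed at that boundary point; this follows because $u\in C^{2,1}(\overline{U})$ and $a_{ij}\in C(\mathbb{R}^{n+1})$ make $u_t-a_{ij}D_{ij}u$ continuous on $\overline{U}$, so the inequality $\ge 0$ in $U$ extends to $\overline{U}$. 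You note this, but it is worth making the continuity step explicit.
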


Let us now state and prove the last main result in this paper regarding the parabolic obstacle problem (\ref{parabolic obs prob}).

\begin{theorem}[Main Theorem 3]
\label{parabolic-obs-thm}
Let $2<p<\infty$ and let $w=w(x,t) \in A_{\frac{p}{2}}$.
Suppose that $\partial \Omega \in C^{1,1}$, $f \in L^p_w(\Omega_T)$ and $\psi \in W^{2,1}L^p_w(\Omega_T)$.
There exists a small $\delta=\delta(n,\lambda,\Lambda,p,w,\partial \Omega,T)>0$ such that if $\mathbf{A}$ is uniformly parabolic and $(\delta,R)$-vanishing, then the obstacle problem (\ref{parabolic obs prob}) has a solution $u \in W^{2,1}L^p_w(\Omega_T)$ and we have the estimate
\begin{equation}
\Norm{u}_{W^{2,1}L^p_w(\Omega_T)} \leq c \left( \Norm{f}_{L^p_w(\Omega_T)}
+ \Norm{\psi}_{W^{2,1}L^p_w(\Omega_T)} \right),
\end{equation}
for some positive constant $c=c(n,\lambda,\Lambda,p,w,\partial \Omega,\mathrm{diam}(\Omega),T)$.
\end{theorem}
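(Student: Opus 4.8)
The plan is to mimic the proof of Theorem~\ref{lin-obs-thm}, replacing the elliptic machinery by its parabolic counterpart. First I would reduce the obstacle problem to a family of penalization-free reference equations. Using an extension $\overline{\psi}$ of $\psi$ to all of $\mr^{n+1}$ with $\Norm{\overline{\psi}}_{W^{2,1}L^p_w(\mr^{n+1})} \leq c \Norm{\psi}_{W^{2,1}L^p_w(\Omega_T)}$ (and extending $f$ by zero outside $\Omega_T$), I set $g := f - \overline{\psi}_t + a_{ij}D_{ij}\overline{\psi}$, so that $g \in L^p_w$ with $\Norm{g}_{L^p_w} \leq c(\Norm{f}_{L^p_w(\Omega_T)} + \Norm{\psi}_{W^{2,1}L^p_w(\Omega_T)})$. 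I then mollify in space-time: $a_{ij}^{\varepsilon} := a_{ij} \ast \varphi_{\varepsilon}$, $\overline{\psi}_{\varepsilon} := \overline{\psi} \ast \varphi_{\varepsilon}$, $f_{\varepsilon} := f \ast \varphi_{\varepsilon}$, and $g_{\varepsilon} := f_{\varepsilon} - (\overline{\psi}_{\varepsilon})_t + a_{ij}^{\varepsilon}D_{ij}\overline{\psi}_{\varepsilon}$, noting that $(a_{ij}^{\varepsilon})$ is uniformly parabolic with the same constants, that $g_{\varepsilon} \to g$ a.e., and that the $L^p_w$ norms are controlled uniformly in $\varepsilon$. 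With $\Phi_{\varepsilon}$ the smooth approximation of the Heaviside function from the proof of Theorem~\ref{fully-obs-thm}, I consider
\begin{equation}
\label{lin approx-parabolic}
\left\{\begin{array}{rclcc}\
(u_{\varepsilon})_t - a_{ij}^{\varepsilon} D_{ij}u_{\varepsilon} & = & g_{\varepsilon}^{-} \Phi_{\varepsilon}(u_{\varepsilon}-\overline{\psi}_{\varepsilon}) + f_{\varepsilon} - g_{\varepsilon}^{-} & \mathrm{in} & \Omega_T,\\
u_{\varepsilon} & = & 0 & \mathrm{on} & \partial_p \Omega_T,
\end{array}\right.
\end{equation}
where the sign choice (here $g^- = \max\{-g,0\}$) is dictated by the differential inequality in \eqref{parabolic obs prob} being $u_t - a_{ij}D_{ij}u \geq f$, the reverse of the elliptic case.

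Next I would solve \eqref{lin approx-parabolic} by a Schauder fixed point argument exactly as before: Lemma~\ref{parabolic eq} gives, for each $v_0 \in L^p_w(\Omega_T)$, a unique $v = Sv_0 \in W^{2,1}L^p_w(\Omega_T)$ solving the linear problem with $\Phi_{\varepsilon}(v_0 - \overline{\psi}_{\varepsilon})$ frozen on the right-hand side; since $0 \leq \Phi_{\varepsilon} \leq 1$, the a priori bound $\Norm{v}_{W^{2,1}L^p_w(\Omega_T)} \leq R$ holds with $R$ independent of $v_0$, and $S$ maps the $R$-ball of $L^p_w(\Omega_T)$ compactly into itself by the compact embedding $W^{2,1}L^p_w(\Omega_T) \hookrightarrow L^p_w(\Omega_T)$, giving a fixed point $u_{\varepsilon}$. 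Lemma~\ref{parabolic eq} then yields $\Norm{u_{\varepsilon}}_{W^{2,1}L^p_w(\Omega_T)} \leq c(\Norm{f}_{L^p_w(\Omega_T)} + \Norm{\psi}_{W^{2,1}L^p_w(\Omega_T)})$ uniformly in $\varepsilon$. Passing to a subsequence $\varepsilon_k \searrow 0$, I extract a weak limit $u \in W^{2,1}L^p_w(\Omega_T)$ with $u_{\varepsilon_k} \to u$ a.e. (and, using the parabolic analogue of Lemma~\ref{embedding-lemma} together with a parabolic Sobolev embedding, uniformly, so that $u$ is continuous and attains the boundary data). The limit inequality $u_t - a_{ij}D_{ij}u \geq f$ a.e. in $\Omega_T$ follows from $(u_{\varepsilon_k})_t - a_{ij}^{\varepsilon_k}D_{ij}u_{\varepsilon_k} = g_{\varepsilon_k}^- \Phi_{\varepsilon_k}(\cdot) - g_{\varepsilon_k}^- + f_{\varepsilon_k} \geq f_{\varepsilon_k}$ by passing to the limit.

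To show $u \geq \psi$, I would argue as in Theorem~\ref{lin-obs-thm}: on $V_k := \{u_{\varepsilon_k} < \overline{\psi}_{\varepsilon_k}\}$ one has $\Phi_{\varepsilon_k}(u_{\varepsilon_k}-\overline{\psi}_{\varepsilon_k})=0$, hence $(u_{\varepsilon_k})_t - a_{ij}^{\varepsilon_k}D_{ij}u_{\varepsilon_k} = f_{\varepsilon_k} - g_{\varepsilon_k}^- = (\overline{\psi}_{\varepsilon_k})_t - a_{ij}^{\varepsilon_k}D_{ij}\overline{\psi}_{\varepsilon_k} + g_{\varepsilon_k}^+ \geq (\overline{\psi}_{\varepsilon_k})_t - a_{ij}^{\varepsilon_k}D_{ij}\overline{\psi}_{\varepsilon_k}$ in $V_k$, so $w_k := u_{\varepsilon_k} - \overline{\psi}_{\varepsilon_k}$ satisfies $(w_k)_t - a_{ij}^{\varepsilon_k}D_{ij}w_k \geq 0$ in $V_k$ with $w_k = 0$ on $\partial_p V_k$ (the part of $\partial V_k$ on $\partial_p\Omega_T$ is handled by $u_{\varepsilon_k}=0 \geq \overline{\psi}_{\varepsilon_k}$ for $\varepsilon$ small, using $\psi \leq 0$ on $\partial_p\Omega_T$); Lemma~\ref{comparison-parabolic} then forces $w_k \geq 0$ on $V_k$, so $V_k = \emptyset$ and $u_{\varepsilon_k} \geq \overline{\psi}_{\varepsilon_k}$, whence $u \geq \psi$ a.e. after letting $k \to \infty$. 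Finally, on $\{u > \psi\} = \bigcup_m \{u > \overline{\psi} + \tfrac1m\}$ the term $\Phi_{\varepsilon_k}(u_{\varepsilon_k}-\overline{\psi}_{\varepsilon_k}) \to 1$ a.e., giving $u_t - a_{ij}D_{ij}u = g^- + f - g^- = f$ there, so the complementarity condition holds; combined with the uniform bound this gives the asserted estimate for $u$. I expect the main obstacle to be the technical regularity bookkeeping needed to apply the pointwise maximum principle Lemma~\ref{comparison-parabolic}, which requires $C^{2,1}$ regularity of the competitors and continuous coefficients — this is precisely why the mollification $a_{ij}^{\varepsilon}$ and the solvability in $W^{2,1}L^p_w$ with $p$ large (plus parabolic Sobolev embedding into $C^{2,1}$ on the relevant subregions, or an approximation of $\overline{\psi}_{\varepsilon}$ by smooth functions) are essential; care is also needed because the statement claims only existence (not uniqueness), so no uniqueness argument is required here, though one could add it via Lemma~\ref{comparison-parabolic} on $\{u_2 > u_1\}$ as in the elliptic case.
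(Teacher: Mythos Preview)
Your overall strategy is the paper's, but your approximation equation has the signs of the $g_{\varepsilon}^{-}$ terms reversed, and this breaks both the inequality and the comparison step. As written, the right-hand side equals $g_{\varepsilon}^{-}(\Phi_{\varepsilon}-1)+f_{\varepsilon}\le f_{\varepsilon}$, so in the limit you obtain $u_t-a_{ij}D_{ij}u\le f$, not $\ge f$. The same slip makes your claimed identity on $V_k$ false: with your $g$ one has $(\overline\psi_{\varepsilon_k})_t-a_{ij}^{\varepsilon_k}D_{ij}\overline\psi_{\varepsilon_k}+g_{\varepsilon_k}^{+}=f_{\varepsilon_k}+g_{\varepsilon_k}^{-}$, not $f_{\varepsilon_k}-g_{\varepsilon_k}^{-}$, so the supersolution inequality for $u_{\varepsilon_k}-\overline\psi_{\varepsilon_k}$ does not follow and Lemma~\ref{comparison-parabolic} cannot be applied. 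The fix is simply to flip both signs:
\[
(u_{\varepsilon})_t-a_{ij}^{\varepsilon}D_{ij}u_{\varepsilon}
=-g_{\varepsilon}^{-}\,\Phi_{\varepsilon}(u_{\varepsilon}-\overline\psi_{\varepsilon})+f_{\varepsilon}+g_{\varepsilon}^{-},
\]
which gives $(u_{\varepsilon})_t-a_{ij}^{\varepsilon}D_{ij}u_{\varepsilon}-f_{\varepsilon}=g_{\varepsilon}^{-}(1-\Phi_{\varepsilon})\ge 0$, and on $V_k$ gives $(u_{\varepsilon_k})_t-a_{ij}^{\varepsilon_k}D_{ij}u_{\varepsilon_k}=f_{\varepsilon_k}+g_{\varepsilon_k}^{-}\ge f_{\varepsilon_k}-g_{\varepsilon_k}=(\overline\psi_{\varepsilon_k})_t-a_{ij}^{\varepsilon_k}D_{ij}\overline\psi_{\varepsilon_k}$, exactly what is needed.

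For comparison, the paper writes $g=-f+\overline\psi_t-a_{ij}D_{ij}\overline\psi$ (the negative of yours) and, to guarantee a $C^{\infty}$ right-hand side for Schauder bootstrapping to $C^{2,1}$ (the hypothesis of Lemma~\ref{comparison-parabolic}, which you correctly flag as the delicate point), replaces the positive part by the smooth cut-off $\Psi_{\varepsilon}(s):=s\,\Phi_{\varepsilon}(s)$, arriving at
\[
(u_{\varepsilon})_t-a_{ij}^{\varepsilon}D_{ij}u_{\varepsilon}
=-\Psi_{\varepsilon}(g_{\varepsilon})\,\Phi_{\varepsilon}(u_{\varepsilon}-\overline\psi_{\varepsilon})+\Psi_{\varepsilon}(g_{\varepsilon})+f_{\varepsilon}.
\]
Since $\Psi_{\varepsilon}(s)\ge s^{+}-\varepsilon$, the comparison on $V_k$ only yields $u_{\varepsilon_k}\ge\overline\psi_{\varepsilon_k}-\varepsilon_k T$, which the paper repairs via $\widetilde u_{\varepsilon_k}:=u_{\varepsilon_k}+\varepsilon_k t$ before passing to the limit. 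Your corrected version with $g_{\varepsilon}^{-}$ would also work---$g_{\varepsilon}^{-}$ is Lipschitz in $(x,t)$, hence H\"older, so Schauder still delivers $u_{\varepsilon}\in C^{2,1}$---and it avoids the $\varepsilon_k T$ correction; but as written the signs must be fixed first.
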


\begin{proof}[Proof of Theorem \ref{parabolic-obs-thm}]
We first note that since $\partial \Omega \in C^{1,1}$, there exists an extension $\overline{\psi}$ of $\psi$ to $\mr^{n+1}$ with $\overline{\psi}=\psi$ a.e. in $\Omega_T$, and
\begin{equation}
\Norm{\overline{\psi}}_{W^{2,1}L^p_w(\mr^{n+1})} \leq c \Norm{\psi}_{W^{2,1}L^p_w(\Omega_T)},
\end{equation}
for some constant $c=c(n,p,w,\partial \Omega,\mathrm{diam}(\Omega),T)$, see \cite{Ch}.
Let $g = -f + \overline{\psi}_t - a_{ij}D_{ij}\overline{\psi}$ in $\mr^{n+1}$ (we extend $f$ to zero outside $\Omega_T$).
Define $f \in L^p_w(\mr^{n+1})$ and $\overline{\psi} \in W^{2,1}L^p_w(\mr^{n+1})$.
Then we see that $g \in L^p_w(\mr^{n+1})$ with the estimate
\begin{align*}
\Norm{g}_{L^p_w(\mr^{n+1})} & \leq c \left( \Norm{f}_{L^p_w(\mr^{n+1})} + \Norm{\overline{\psi}}_{W^{2,1}L^p_w(\mr^{n+1})} \right) \\
& \leq c \left( \Norm{f}_{L^p_w(\Omega_T)} + \Norm{\psi}_{W^{2,1}L^p_w(\Omega_T)} \right).
\end{align*}
We now let $\varphi$ denote a standard mollifier with support in
$Q_1$, and define $\varphi_{\varepsilon}(x,t):=\varepsilon^{-(n+1)}\varphi((x,t)/\varepsilon)$.
We then consider the regularizations $a_{ij}^{\varepsilon}:= a_{ij} \ast \varphi_{\varepsilon}$, $f_{\varepsilon}:= f \ast \varphi_{\varepsilon}$, $\overline{\psi}_{\varepsilon}:= \overline{\psi} \ast \varphi_{\varepsilon}$,
$(\overline{\psi}_t)_{\varepsilon}:= \overline{\psi}_t \ast \varphi_{\varepsilon} = (\overline{\psi}_{\varepsilon})_t$ and $g_{\varepsilon}: = -f_{\varepsilon} +(\overline{\psi}_{\varepsilon})_t - a_{ij}^{\varepsilon} D_{ij}\overline{\psi}_{\varepsilon}$.
We note that for each $\varepsilon>0$, the matrix $(a_{ij}^{\varepsilon}): \mr^n \times \mr \rightarrow \mr^{n \times n}$ is uniformly parabolic with the same constants $\lambda$ and $\Lambda$.
Moreover, we see that $g_{\varepsilon} \to g$ almost everywhere, as $\varepsilon \to 0$, and that
\begin{align*}
\Norm{g_{\varepsilon}}_{L^p_w(\mr^{n+1})} & \leq c \left( \Norm{f_{\varepsilon}}_{L^p_w(\mr^{n+1})} + \Norm{\overline{\psi}_{\varepsilon}}_{W^{2,1}L^p_w(\mr^{n+1})} \right) \\
& \leq c \left( \Norm{f}_{L^p_w(\mr^{n+1})} + \Norm{\overline{\psi}}_{W^{2,1}L^p_w(\mr^{n+1})} \right) \\
& \leq c \left( \Norm{f}_{L^p_w(\Omega_T)} + \Norm{\psi}_{W^{2,1}L^p_w(\Omega_T)} \right).
\end{align*}

We next let $\Phi_{\varepsilon}(s)$ be the function in the proof of Theorem \ref{fully-obs-thm}, and define $\Psi_{\varepsilon}(s):=s\Phi_{\varepsilon}(s)$ for $s \in \mr$. Then the function $\Psi_{\varepsilon} \in C^{\infty}(\mr)$ is non-decreasing and satisfies
\begin{equation*}
\Psi_{\varepsilon}(s) \equiv 0 \quad \mathrm{if} \quad s \leq 0; \qquad \Psi_{\varepsilon}(s) \equiv s \quad \mathrm{if} \quad s \geq \varepsilon,
\end{equation*}
and
\begin{equation*}
0 \leq \Psi_{\varepsilon}(s) \leq s, \quad \forall s \geq 0.
\end{equation*}
Now let us look at the following problem:
\begin{equation}
\label{lin approx-parabolic}
\left\{\begin{array}{rclc}\
(u_{\varepsilon})_t - a_{ij}^{\varepsilon} D_{ij}u_{\varepsilon} & = & - \Psi_{\varepsilon}(g_{\varepsilon}) \, \Phi_{\varepsilon}(u_{\varepsilon}-\overline{\psi}_{\varepsilon}) + \Psi_{\varepsilon}(g_{\varepsilon}) + f_{\varepsilon} &  \hspace{-0.4cm} \mathrm{ in } \  \Omega_T,\\
u_{\varepsilon} & = & 0 & \mathrm{ on } \  \ \partial_p \Omega_T.
\end{array}\right.
\end{equation}
According to Lemma \ref{parabolic eq}, we find that for each $v_0 \in L^p_w(\Omega_T)$, there exists a function $v \in W^{2,1}L^p_w(\Omega_T)$ such that
\begin{equation*}
\left\{\begin{array}{rclcc}\
v_t - a_{ij}^{\varepsilon} D_{ij}v & = & - \Psi_{\varepsilon}(g_{\varepsilon}) \ \Phi_{\varepsilon}(v_0-\overline{\psi}_{\varepsilon}) + \Psi_{\varepsilon}(g_{\varepsilon}) + f_{\varepsilon} & \mathrm{in} & \Omega_T,\\
v & = & 0 & \mathrm{on} & \partial_p \Omega_T.
\end{array}\right.
\end{equation*}
Recall that $0 \leq \Phi_{\varepsilon}(s) \leq 1$ and $0 \leq \Psi_{\varepsilon}(s) \leq |s|$ for all $s \in \mr$, to find from Lemma \ref{parabolic eq} that
\begin{equation*}
\Norm{v}_{W^{2,1}L^p_w(\Omega_T)} \leq R,
\end{equation*}
where $R$ is independent of $v_0$.
Let us write $v=Sv_0$.
Then observe that $S$ maps the $R$-ball in $L^p_w(\Omega_T)$ into itself and that $S$ is compact.
Thus it follows from Schauder's fixed point theorem that there is a unique $u_{\varepsilon}$ such that $u_{\varepsilon}=Su_{\varepsilon}$, which is the solution to the problem (\ref{lin approx-parabolic}).

Lemma \ref{parabolic eq} now yields
\begin{align*}
& \Norm{u_{\varepsilon}}_{W^{2,1}L^p_w(\Omega_T)} \\
& \quad \leq c \left( \Norm{\Psi_{\varepsilon}(g_{\varepsilon}) \ \Phi_{\varepsilon}(u_{\varepsilon}-\overline{\psi}_{\varepsilon})}_{L^p_w(\Omega_T)} + \Norm{\Psi_{\varepsilon}(g_{\varepsilon})}_{L^p_w(\Omega_T)} + \Norm{f_{\varepsilon}}_{L^p_w(\Omega_T)} \right) \\
& \quad \leq c \left( \Norm{g_{\varepsilon}}_{L^p_w(\Omega_T)} + \Norm{g_{\varepsilon}}_{L^p_w(\Omega_T)} + \Norm{f_{\varepsilon}}_{L^p_w(\Omega_T)} \right) \\
& \quad \leq c \left( \Norm{g_{\varepsilon}}_{L^p_w(\mr^{n+1})} + \Norm{f}_{L^p_w(\mr^{n+1})} \right) \\
& \quad \leq c \left( \Norm{f}_{L^p_w(\Omega_T)} + \Norm{\psi}_{W^{2,1}L^p_w(\Omega_T)} \right).
\end{align*}
Hence $\{ u_{\varepsilon} \}$ is uniformly bounded in $W_0^{2,1}L^p_w(\Omega_T)$.
So we can find a subsequence $\{ u_{\varepsilon_k} \}_{k=1}^{\infty}$ with $\varepsilon_k \searrow 0$, and a function $u \in W_0^{2,1}L^p_w(\Omega_T)$ such that $u_{\varepsilon_k}$ converges to $u$ weakly in $W^{2,1}L^p_w(\Omega_T)$, and $u_{\varepsilon_k}$ converges to $u$ almost everywhere.
Since $f_{\varepsilon}, g_{\varepsilon}, \overline{\psi}_{\varepsilon}$ and $\Psi_{\varepsilon}$ are smooth functions, Lemma \ref{parabolic eq} implies that $u_{\varepsilon} \in W_0^{2,1}L^q(\Omega_T)$ for all $q \in (2,\infty)$, and so $u_{\varepsilon} \in C^{\alpha}(\Omega_T)$ for some $\alpha \in (0,1)$.
Then, by Schauder's theorem, $u_{\varepsilon}, D_x u_{\varepsilon}, D^2_x u_{\varepsilon}$ and $D_t u_{\varepsilon}$ belong to $C^{\alpha}(\Omega_T)$, and so we conclude that $u_{\varepsilon} \in C^{2,1}(\Omega_T)$.

We next claim that $u$ is a solution of the obstacle problem (\ref{parabolic obs prob}).
Observe that $u=0$ on $\partial_p \Omega_T$.
We recall (\ref{lin approx-parabolic}) to discover that
\begin{equation*}
\left( u_{\varepsilon_k} \right)_t - a_{ij}^{\varepsilon_k} D_{ij}u_{\varepsilon_k} = \Psi_{\varepsilon_k}(g_{\varepsilon_k}) \left( 1 - \Phi_{\varepsilon_k} (u_{\varepsilon_k}-\overline{\psi}_{\varepsilon_k}) \right) + f_{\varepsilon_k} \geq f_{\varepsilon_k} \ \quad \mathrm{in} \ \ \Omega.
\end{equation*}
Passing to the limit $k \to \infty$, we find that $u_t - a_{ij}D_{ij}u \geq f$ a.e. in $\Omega_T$.

We next want to show that $u \geq \psi$ in $\Omega_T$.
To do this, fix $k \in \mathbb{N}$.
We then observe that $\Phi_{\varepsilon_k}(u_{\varepsilon_k}-\overline{\psi}_{\varepsilon_k})=0$ on $V_k := \left\lbrace u_{\varepsilon_k} < \overline{\psi}_{\varepsilon_k} \right\rbrace$, and so we discover that $(u_{\varepsilon_k})_t - a_{ij}^{\varepsilon_k} D_{ij}u_{\varepsilon_k} = \Psi_{\varepsilon_k}(g_{\varepsilon_k}) + f_{\varepsilon_k}$ in $V_k$.
If $V_k = \emptyset$, then $u_{\varepsilon_k} \geq \overline{\psi}_{\varepsilon_k}$ in $\Omega_T$.
On the other hand, if $V_k \neq \emptyset$, then it follows from the definition of $\Psi_{\varepsilon_k}$ and $g_{\varepsilon_k}$ that
\begin{align*}
\left( u_{\varepsilon_k} \right)_t - a_{ij}^{\varepsilon_k} D_{ij}u_{\varepsilon_k} & = \Psi_{\varepsilon_k}(g_{\varepsilon_k}) + f_{\varepsilon_k} \\
& \geq g_{\varepsilon_k}^+ - \varepsilon_k + f_{\varepsilon_k} \\
& = g_{\varepsilon_k} + f_{\varepsilon_k} - \varepsilon_k + g_{\varepsilon_k}^- \\
& = \left( \overline{\psi}_{\varepsilon_k} \right)_t - a_{ij}^{\varepsilon_k} D_{ij}\overline{\psi}_{\varepsilon_k} - \varepsilon_k + g_{\varepsilon_k}^-\\
& \geq \left( \overline{\psi}_{\varepsilon_k} \right)_t - a_{ij}^{\varepsilon_k} D_{ij} \overline{\psi}_{\varepsilon_k} - \varepsilon_k \ \quad \mathrm{in} \ \ V_k.
\end{align*}
Define $\widetilde{u}_{\varepsilon_k}(x,t) := u_{\varepsilon_k}(x,t) + \varepsilon_k t$.
It is a straightforward to check
\begin{equation}
\left\{\begin{array}{rclcc}\
\left( \widetilde{u}_{\varepsilon_k} - \overline{\psi}_{\varepsilon_k} \right)_t - a_{ij}^{\varepsilon_k} D_{ij} \left( \widetilde{u}_{\varepsilon_k} - \overline{\psi}_{\varepsilon_k} \right) & \geq & 0 & \mathrm{in} & V_k,\\
\widetilde{u}_{\varepsilon_k} - \overline{\psi}_{\varepsilon_k} & \geq & 0 & \mathrm{on} & \partial_p V_k,
\end{array}\right.
\end{equation}
where we have used the fact that $u_{\varepsilon_k}=\overline{\psi}_{\varepsilon_k}$ on $\partial_p V_k$.
Then from Lemma \ref{comparison-parabolic}, we have
\begin{equation*}
\widetilde{u}_{\varepsilon_k} - \overline{\psi}_{\varepsilon_k} \geq 0 \ \ \ \mathrm{in} \ \ V_k,
\end{equation*}
and thus
\begin{equation*}
u_{\varepsilon_k} - \overline{\psi}_{\varepsilon_k} \geq -\varepsilon_k t \geq -\varepsilon_k T \ \ \ \mathrm{in} \ \ V_k.
\end{equation*}
Recalling the definition of $V_k$, we see that $u_{\varepsilon_k} - \overline{\psi}_{\varepsilon_k} \geq -\varepsilon_k T$ in $\Omega_T$.
Passing to the limit $k \to \infty$, we discover that $u - \overline{\psi} \geq 0$ a.e. in $\Omega_T$.
Therefore, we conclude that $u - \psi \geq 0$ a.e. in $\Omega_T$.

Finally, we show that $u_t - a_{ij}D_{ij}u = f$ on the set
$\left\lbrace u > \psi \right\rbrace$.
To prove this, we find that for each $m \in \mathbb{N}$, $\Phi_{\varepsilon_k}(u_{\varepsilon_k}-\overline{\psi}_{\varepsilon_k})$ converges to $1$, and $\Psi_{\varepsilon_k}(g_{\varepsilon_k})$ converges to $g^+$ almost everywhere on the set $\left\lbrace u > \overline{\psi} + \frac{1}{m} \right\rbrace$.
Then
\begin{equation*}
u_t - a_{ij}D_{ij}u = - g^+ + g^+ + f = f
\end{equation*}
on the set $\left\lbrace u > \psi \right\rbrace = \left\lbrace u > \overline{\psi} \right\rbrace = \bigcup_{m=1}^{\infty} \left\lbrace u > \overline{\psi} + \frac{1}{m} \right\rbrace$.

As a consequence, we conclude that the problem (\ref{obs prob fully}) has a solution $u \in W^{2,1}L^p_w(\Omega_T)$ and we have the desired estimate
\begin{equation*}
\Norm{u}_{W^{2,1}L^p_w(\Omega_T)} \leq \liminf_{k \to \infty} \Norm{u_{\varepsilon_k}}_{W^{2,1}L^p_w(\Omega_T)} \leq c \left( \Norm{f}_{L^p_w(\Omega_T)} + \Norm{\psi}_{W^{2,1}L^p_w(\Omega_T)} \right).
\end{equation*}
\end{proof}

\begin{remark}
We remark that the uniqueness of a solution to the parabolic obstacle problem (\ref{parabolic obs prob}) is not evident in general.
However, when $D_x a_{ij}$ exist and are bounded, one can obtain the uniqueness of a solution by coerciveness, see for instance \cite{Fri}.
\end{remark}

\bibliographystyle{amsplain}

\begin{thebibliography}{10}


\bibitem{BC1} M. Bramanti and M. Cerutti,
\textit{$W_p^{1,2}$ solvability for the Cauchy-Dirichlet problem for parabolic equations with VMO coefficients},
Comm. Partial Differential Equations \textbf{18} (9-10) (1993), 1735-1763.



\bibitem{BL1} S. Byun and M. Lee,
\textit{Weighted estimates for nondivergence parabolic equations in Orlicz spaces},
J. Funct. Anal. \textbf{269} (8) (2015), 2530-2563.



\bibitem{BL2} S. Byun and M. Lee,
\textit{On weighted $W^{2,p}$ estimates for elliptic equations with BMO coefficients in nondivergence form},
Internat. J. Math. \textbf{26} (1) (2015), 1550001, 28 pp.



\bibitem{BLP} S. Byun, M. Lee and D. Palagachev,
\textit{Hessian estimates in weighted Lebesgue spaces for fully nonlinear elliptic equations},
J. Differential Equations \textbf{260} (5) (2016), 4550-4571.



\bibitem{BP1} S. Byun and D. Palagachev,
\textit{Weighted $L^p$-estimates for elliptic equations with measurable coefficients in nonsmooth domains},
Potential Anal. \textbf{41} (1) (2014), 51-79.



\bibitem{Ca} L.A. Caffarelli,
\textit{Interior a priori estimates for solutions of fully nonlinear equations},
Ann. Math. (2) \textbf{130} (1) (1989), 189-213.



\bibitem{CC} L.A. Caffarelli and X. Cabr\'{e},
\textit{Fully nonlinear elliptic equations},
American Mathematical Society Colloquium Publications, \textbf{43}. Providence, RI: Amer. Math. Soc. 1995.



\bibitem{CCKS} L.A. Caffarelli, M.G. Crandall, M. Kocan and A. \'{S}wi\c{e}ch,
\textit{On viscosity solutions of fully nonlinear equations with measurable ingredients},
Comm. Pure Appl. Math. \textbf{49} (4) (1996), 365-397.



\bibitem{Cam} S. Campanato,
\textit{Sistemi ellittici in forma divergenza. Regolarit\'{a} all'interno},
Quaderni. Scuola Normale Superiore Pisa, Pisa, 1980.



\bibitem{CFL1} F. Chiarenza, M. Frasca and P. Longo,
\textit{Interior $W^{2,p}$ estimates for nondivergence elliptic equations with discontinuous coefficients},
Ricerche Mat. \textbf{40} (1) (1991), 149-168.



\bibitem{CFL2} F. Chiarenza, M. Frasca and P. Longo,
\textit{$W^{2,p}$-solvability of the Dirichlet problem for nondivergence elliptic equations with VMO coefficients},
Trans. Amer. Math. Soc. \textbf{336} (2) (1993), 841-853.



\bibitem{Ch}  S.-K. Chua,
\textit{Some remarks on extension theorems for weighted Sobolev spaces},
Illinois J. Math. \textbf{38} (1) (1994), 95-126.



\bibitem{CR}  R.R. Coifman and R. Rochberg,
\textit{Another characterization of BMO},
Proc. Amer. Math. Soc. \textbf{79} (2) (1980), 249-254.



\bibitem{Es} L. Escauriaza,
\textit{$W^{2,n}$ a priori estimates for solutions to fully nonlinear equations},
Indiana Univ. Math. J. \textbf{42} (2) (1993), 413-423.



\bibitem{FS} A. Figalli and H. Shahgholian,
\textit{A general class of free boundary problems for fully nonlinear elliptic equations},
Arch. Ration. Mech. Anal. \textbf{213} (1) (2014), 269-286.



\bibitem{Fri} A. Friedman,
\textit{Variational principles and free-boundary problems},
A Wiley-Interscience Publication. Pure and Applied Mathematics. John Wiley \& Sons, Inc., New York, 1982.



\bibitem{HHH} R. Haller-Dintelmann, H. Heck and M. Hieber,
\textit{$L^p$-$L^q$ estimates for parabolic systems in non-divergence form with VMO coefficients},
J. London Math. Soc. (2) \textbf{74} (3) (2006), 717-736.



\bibitem{IM} E. Indrei and A. Minne,
\textit{Regularity of solutions to fully nonlinear elliptic and parabolic free boundary problems},
Ann. Inst. H. Poincar\'{e} Anal. Non Linéaire \textbf{33} (5) (2016), 1259-1277.



\bibitem{Ki} T. Kilpel\"{a}inen,
\textit{Smooth approximation in weighted Sobolev spaces},
Comment. Math. Univ. Carolin. \textbf{38} (1) (1997), 29-35.



\bibitem{Li} G.M. Lieberman,
\textit{Second order parabolic differential equations},
World Sci. Publ. Co., Inc., River Edge, NJ, 1996.



\bibitem{PSU} A. Petrosyan, H. Shahgholian and N. Uraltseva,
\textit{Regularity of free boundaries in obstacle-type problems},
Graduate Studies in Mathematics, 136. American Mathematical Society, Providence, RI, 2012.



\bibitem{St} E.M. Stein,
\textit{Harmonic analysis: real-variable methods, orthogonality, and oscillatory integrals},
Princeton Mathematical Series, 43. Monographs in Harmonic Analysis, III, Princeton University Press, Princeton, NJ, 1993.



\bibitem{Te} K.H. Teka,
\textit{The obstacle problem for second order elliptic operators in nondivergence form},
Thesis (Ph.D.)-Kansas State University. 2012.



\bibitem{Tu} B.O. Turesson,
\textit{Nonlinear potential theory and weighted Sobolev spaces},
Lecture Notes in Mathematics, 1736. Springer-Verlag, Berlin, 2000.



\bibitem{Ur} J.M. Urbano,
\textit{The method of intrinsic scaling. A systematic approach to regularity for degenerate and singular PDEs},
Lecture Notes in Mathematics, 1930. Springer-Verlag, Berlin, 2008.



\bibitem{WY1} L. Wang and F. Yao,
\textit{Higher-order nondivergence elliptic and parabolic equations in Sobolev spaces and Orlicz spaces},
J. Funct. Anal. \textbf{262} (8) (2012), 3495-3517.



\bibitem{Wi} N. Winter,
\textit{$W^{2,p}$ and $W^{1,p}$-estimates at the boundary for solutions of fully nonlinear, uniformly elliptic equations},
Z. Anal. Anwend. \textbf{28} (2) (2009), 129-164.



\end{thebibliography}

\end{document}